\font\fraktwelve=eufm10 at 12pt
\font\frakten=eufm10
\font\frakseven=eufm7
\def\eul{\fam=9}
\newcommand{\Gp}{{\eul p}}
\newcommand{\Gq}{{\eul q}}
\newcommand{\Gs}{{\eul s}}
\newcommand{\ssigma}{{\!\! \mbox{ }^\sigma \!}}
\newcommand{\ssigmai}{{\!\! \mbox{ }^{\sigma_{i}} \!}}
\newcommand{\ssigmaone}{{\!\! \mbox{ }^{\sigma_{1}}}}
\newcommand{\ssigmar}{{\!\! \mbox{ }^{\sigma_{r}} \!}}
\newcommand{\ssigman}{{\!\! \mbox{ }^{\sigma_{n}}}}
\def\section{\@startsection {section}{1}{\z@}{-3.5ex plus
  -1ex minus -.2ex}{2.3ex plus .2ex}{\normalsize\bf}}
\def\subsection{\@startsection {subsection}{2}{\z@}{-3.25ex plus
  -1ex minus -.2ex}{1.5ex plus .2ex}{\normalsize\bf}}
\newtheorem{thm}{Theorem}[section]
\newtheorem{prop}[thm]{Proposition}
\newtheorem{cor}[thm]{Corollary}
\newtheorem{lemma}[thm]{Lemma}
\newcommand{\blacksquare}{{\vrule height2ex width0.75em}}
\newenvironment{proof}{\begin{sc}\noindent Proof: \end{sc}}{
     \hbox to 2em{}\nobreak\hfill$\blacksquare$\par\medskip}
\newcommand{\Gal}{\mathop{\rm Gal}}
\newfont{\cyr}{wncyr10}
\newcommand{\Sh}{\hbox {\cyr Sh}}
\begin{document}

\normalsize
\baselineskip=18pt

\begin{center}
{\bf Computing a Selmer group of a Jacobian using functions on the curve} \\
Mathematische Annalen, 310, 447--471, (1998). \\
This version contains the correction to Proposition 2.4.
\end{center}

\vspace{.4in}

\begin{center}
Edward F.\ Schaefer \\
Santa Clara University
\end{center}

\noindent
Math Subject Classification number (primary): 11G30, (secondary):
 11D25,
11G10, 14G25, 14H25, 14H40, 14H45

\vspace{.4in}

\noindent
{\bf Abstract}\footnote{{\bf Acknowledgments:}
The author is supported by the National Security
Agency grant MDA904-95-H-1051.
The author is
grateful to Gerhard Frey and the Institut f\"{u}r
experimentelle Mathematik for their hospitality during the preparation of
part of this paper and to Joseph Wetherell and the referee
for helpful comments on the manuscript. The author also had useful discussions
with Victor Flynn,
Everett Howe, Matthew Klassen and Michael Stoll and made much use of
the program GP-PARI. The author is grateful to Adam Logan for pointing out the
error in Proposition 2.4 (now corrected).}

In general, algorithms for computing the Selmer group of the Jacobian of a
curve have relied on either homogeneous spaces or functions on the curve.
We present a theoretical analysis of algorithms which use functions on the
curve, and show how to exploit special properties of curves to
generate new Selmer group computation algorithms.  The success of such an
algorithm will be based on two criteria that we discuss.
To illustrate the types of properties which can be exploited, we develop a
$(1-\zeta_{p})$-Selmer group computation algorithm
for the Jacobian of a curve of the form
$y^{p}=f(x)$ where $p$ is a prime not dividing the degree of $f$. We compute
Mordell-Weil ranks of the Jacobians of three curves of this form.
We also compute a
2-Selmer group for the Jacobian of a smooth plane
quartic curve using bitangents of that curve, and use it to compute a
Mordell-Weil rank.

\section{Introduction}
\label{intro}	

Several algorithms have been developed for computing Selmer
groups for the Jacobians of curves. Typically,
one is interested in computing a Selmer group in order to bound a
Mordell-Weil rank or study a part of a Tate-Shafarevich group
(see, for example, \cite{Kr}).
For curves of genera 1 and 2,
algorithms using homogeneous spaces have been developed for
computing Selmer groups (\cite{BSD,GG}).  Already in the genus 2
case, the homogeneous spaces are quite difficult to describe.
For that reason, these tend to be somewhat unwieldy to implement.
Other algorithms use functions on the curve to
compute a Selmer group (\cite{BK,Ca,CF,Fd,FPS,KS,Mc,PS,Sc1,Tp}).
These tend to be far easier.
Their success, however, seems to be based on two assumptions.
These assumptions have been satisfied in the examples presented, so far,
but should not be expected to be satisfied in all
cases.
In this paper,
we attempt to provide a framework for the study and development of
algorithms for computing Selmer groups using functions on the curve.
In particular, we consider the assumptions they are based on.

Let $C$ be a curve defined over the number field $K$ and let $J$
be its Jacobian. We standardly identify $J$ with ${\rm Pic}^{0}
(C(\overline{K}))$ which we will denote ${\rm Pic}^{0}(C)$.
Let $A$ be an abelian variety defined over $K$ and let $\phi :A\rightarrow
J$ be an isogeny defined over $K$. Let $A[\phi]$ denote the kernel of
$\phi$.
For most practical purposes (such as descent),
it is really only useful to work with an
isogeny whose kernel has a prime-power exponent. So
we assume that $A[\phi]$ has
exponent $q=p^{l}$ for some prime number $p$.

In Section~\ref{alg} we provide a framework for developing an
algorithm for computing the $\phi$-Selmer group for $A$ over $K$.
For the sake of clarity, we first describe a straightforward
way of creating such an algorithm.
The assumptions that such an algorithm is based on
are discussed in Section~\ref{ass}.
All but one algorithm in the literature, that the author is aware of,
fits into the framework described. In Section~\ref{ext},
we discuss how this framework can be extended in special cases
to encompass this and other algorithms.

The strength of this approach is that it allows us to develop an
algorithm tailored to the data at hand. We give several examples.
In Section~\ref{ytop}
we describe an algorithm
for computing a Selmer group
for curves of the form $y^{p}=f(x)$ where $p$ is
a prime not dividing the degree of
$f$. We do three examples. Let $\zeta_{p}$ denote a primitive
$p$th root of unity.
In the first, we find the
Mordell-Weil ranks of the Jacobian of $y^{3}=(x^{2}+1)(x^{2}-4x+1)$
over ${\bf Q}(\zeta_{3})$ and ${\bf Q}$. In the second we describe
all solutions of $y^{2}=x^{5}+1$ in fields of degree 2 or less over
${\bf Q}$. In the third we describe all solutions of
$y^{3}=x(x-1)(x-2)(x-3)$ in fields of degree 3 or less over
${\bf Q}$. In the latter two examples, the Mordell-Weil ranks of
the Jacobians are 0 over ${\bf Q}(\zeta_{5})$ and ${\bf Q}(\zeta_{3})$
respectively.
In Section~\ref{bitan} we
compute the 2-Selmer group and Mordell-Weil rank, over ${\bf Q}$,
of the Jacobian of a smooth plane quartic curve, using
bitangents of the curve.

In Section~\ref{lit} we give a review of the literature in which
algorithms for curves of genus greater than 1 are discussed.

\section{The algorithm}
\label{alg}

Let us define the Selmer group.
Let $J$, $K$, $A$, $\phi$, $q$ and $p$ be as in section~\ref{intro}.
Let $S$ be a finite set of primes of $K$ that includes primes over $p$,
primes dividing the conductor of $A$, and if $p=2$, includes real primes
also. For any $\Gal (\overline{K}/
K)$-module $M$ let $M(K)$ denote the $\Gal (\overline{K}/K)$-invariants of $M$
and $H^{1}(K,M)$ denote $H^{1}(\Gal (\overline{K}/K),M)$.
Let $H^{1}(K,A[\phi];S)$ denote the subgroup of $H^{1}(K,A[\phi])$
of cocycle classes that are unramified outside $S$.
Let $\delta$ be the
map from $J(K)$ to $H^{1}(K,A[\phi])$
arising from the long exact sequence of Galois cohomology attached to the
short exact sequence
\[
0\rightarrow A[\phi] \rightarrow A \stackrel{\phi}{\rightarrow} J \rightarrow
0.\]
The kernel of $\delta$ is $\phi A(K)$.
Similarly, for any prime $\Gs\in S$ we have a coboundary map
$\delta_{\Gs}$
{}from $J(K_{\Gs})$ to $H^{1}(K_{\Gs},A[\phi])$ with kernel $\phi A(K_{\Gs})$.
Let $\alpha_{\Gs}$ be
the restriction map from $H^{1}(K,A[\phi])$ to $H^{1}(K_{\Gs},A[\phi])$.
The following is a commutative diagram.
\[
\begin{array}{ccc}
J(K)/\phi A(K) & \stackrel{\delta}{\hookrightarrow} & H^{1}(K,A[\phi];S) \\
\downarrow & & \downarrow\prod\alpha_{\Gs} \\
\prod\limits_{\Gs\in S}
J(K_{\Gs})/\phi A(K_{\Gs}) & \stackrel{\prod\delta_{\Gs}}{\hookrightarrow}
 & \prod\limits_{\Gs\in S}H^{1}(K_{\Gs},A[\phi])\end{array}\]
Define the Selmer group, $S^{\phi}(K,A)$,
to be the intersection of the groups $\alpha_{\Gs}^{-1}(\delta_{\Gs}
(J(K_{\Gs})/\phi A(K_{\Gs})))$ for
all $\Gs\in S$. This is equivalent to the usual definition (see
\cite[p.\ 92]{Mi3}).

In Section~\ref{alg1}, we
find a finitely generated $K$-algebra $L$ and a
map $F$, derived from functions on $C$, so that $F$ maps
$J(K)/\phi A(K)$ to $L^{\ast}/L^{\ast q}$.
In Section~\ref{eom}
we find a map $\iota$ from
$H^{1}(K,A[\phi])$ to $L^{\ast}/L^{\ast q}$ so that
$F=\iota\circ \delta$. The map $\iota$ will be induced from a Weil pairing
and a Kummer map.
Let $L_{\Gs}=L\otimes_{K}K_{\Gs}$.
We will similarly be able to define maps
$F_{\Gs}$ and  $\iota_{\Gs}$ so that $F_{\Gs}=\iota_{\Gs}\circ \delta_{\Gs}$.
Once these maps are defined,
the following will be a commutative diagram
where the $\beta_{\Gs}$'s
are natural maps.
\[
\begin{array}{ccccc}
J(K)/\phi A(K) & \stackrel{\delta}{\hookrightarrow} & H^{1}(K,A[\phi];S)
 & \stackrel{\iota}{\hookrightarrow}  & L^{\ast}/L^{\ast q} \\
\downarrow & & \downarrow\prod\alpha_{\Gs} & & \downarrow\prod\beta_{\Gs} \\
\prod\limits_{\Gs\in S}
J(K_{\Gs})/\phi A(K_{\Gs}) & \stackrel{\prod\delta_{\Gs}}{\hookrightarrow}
 & \prod\limits_{\Gs\in S}H^{1}(K_{\Gs},A[\phi]) &
\stackrel{\prod\iota_{\Gs}}{\hookrightarrow}  &
\prod\limits_{\Gs\in S} L_{\Gs}^{\ast}/L_{\Gs}^{\ast q}
\end{array}\]
We finish that section by making an assumption causing
$\iota$ and $\iota_{\Gs}$ and
hence $F$ and $F_{\Gs}$ to be injective.
In Section~\ref{alg2},
we show how to use
the maps $F$ and $F_{\Gs}$ to compute the Selmer group.

In order for the maps $F$ and $F_{\Gs}$ to be
derived from
functions on $C$, we need to make the following assumption.
We will denote ${\rm Div}^{0}(C(\overline{K}))$ by ${\rm Div}^{0}(C)$.

\vspace{1mm}
\noindent
{\bf Assumption I}: For ${\cal K}= K$ or $K_{\Gs}$, with $\Gs\in S$,
every element of $J({\cal K})/\phi A({\cal K})$ is represented by a
divisor class containing an element of ${\rm Div}^{0}(C)({\cal K})$,
the divisors of $C$ of degree 0 defined over ${\cal K}$.

\subsection{The choice of $F$ and $L$}
\label{alg1}

Since we will be dealing with a Weil pairing, we need to consider the
dual isogeny to $\phi$.
Let $\hat{\phi} : \hat{J} \rightarrow \hat{A}$ be the dual isogeny
to $\phi$ and $\hat{J}[\hat{\phi}]$ be the kernel of $\hat{\phi}$.
Let $\lambda : J\rightarrow \hat{J}$
be the canonical principal polarization of $J$ with respect to $C$.
Since $C$ is defined over $K$, the principal polarization $\lambda$ is also
(see \cite[p.\ 186]{Mi1}).
Let $\Psi=\lambda^{-1}(\hat{J}[\hat{\phi}])$;
we know $\Psi$ is contained in $J[q]$.

\vspace{3mm}
\noindent
{\bf Step 1.} Determine the subgroup of $J[q]$ that is $\Psi$.

If $\phi$ is the multiplication by $q$ map, then $\Psi=J[q]$. For
non-trivial examples, see Proposition~\ref{pp} and Section~\ref{lit}.

\vspace{3mm}
\noindent
{\bf Step 2.} Choose some suitable $\Gal (\overline{K}/
K)$-invariant set of divisors in ${\rm Div}^{0}(C)$ whose
classes span $\Psi$.

We denote
the linear equivalence class of the degree-0 divisor $D$
in ${\rm Pic}^{0}(C)$ by $[D]$.
We choose $\{ D_1, \ldots ,D_n \} $ to be a $\Gal
(\overline{K}/K)$-invariant set of degree-0 divisors of $C$ for
which the divisor classes $\{ [D_{i}] \}$ span $\Psi$.
As we will see, the choice of spanning set determines the map
$F$ and the $K$-algebra $L$.
Typically one wants a minimal, Galois-invariant spanning set.
We also want to pick a spanning set so that the second assumption holds,
if possible. This is discussed in Section~\ref{ass}.

\vspace{3mm}
\noindent
{\bf Step 3.} Determine the map $F$ and the finitely generated $K$-algebra
$L$ based on the divisors chosen in Step 2.

First we define the finitely generated $K$-algebra $L$.
Let \[
L'=\prod_{i=1}^{n}\overline{K_{i}}\]
with $K_{i}=K$. Let us define an action of $\Gal (\overline{K}/K)$ on
$L'$.
If $\sigma\in \Gal (\overline{K}/K)$, then
let $\overline{\sigma}\in S_{n}$ be defined such that if $\ssigma D_{i}
= D_{j}$, then $\overline{\sigma}i=j$. Let \[
\ssigma (a_{1},\ldots ,a_{n}) = (\ssigma a_{\overline{\sigma}^{-1}1},
\ldots ,\ssigma a_{\overline{\sigma}^{-1}n})\] for $a_{i}\in
\overline{K_{i}}$. Define $L$ to be the $\Gal (\overline{K}/K)$-invariants
in $L'$.

Let us find a more practical description of $L$. Let $\Lambda$
be a subset of $\{ 1, 2, \ldots ,n \} $ such that the set $\{ D_{j} \}_{j\in
\Lambda}$ contains one representative of each $\Gal (\overline{K}/K)$-orbit
of $\{ D_{i} \} $. Let $L_{j}=K(D_{j})$ be the minimal field of definition of
$D_{j}$. Then we can find an isomorphism
\[
L\cong \prod_{j\in\Lambda}L_{j}.\] Let us describe that isomorphism.
For simplicity, assume that $\Gal (\overline{K}/K)$ acts transitively
on the $\{ D_{i} \} $ and let $\Lambda = 1$. We have $L_{1}=K(D_{1})$.
Let $\{ \sigma_{i} \} $ be elements of $\Gal (\overline{K}/K)$ such that
\[
\Gal (\overline{K}/K) = \coprod_{i} \sigma_{i} \Gal (\overline{K}/L_{1})\]
and
$\ssigmai D_{1}=D_{i}$.
We have\[
L_{1}\cong L\;\; {\rm by}\;\; l\in L_{1}\mapsto (\ssigmaone l,\ldots ,
\ssigman l)\in \prod_{i=1}^{n}\overline{K_{i}}.\]
If there are several orbits, then we can extend this isomorphism by
concatenation.

Let us define the map $F$.
Let $qD_{i}=(f_{i})$ where $f_{i}$ is defined over $K(D_{i})$.
Such $f_{i}$'s exist over $K(D_{i})$ by Hilbert's Theorem 90.
Let ${\rm Supp}(D_{i})$ be the support of the divisor
$D_{i}$.

\vspace{1mm}
\noindent
{\bf Definition:} The avoidance set is the set of points
$\cup{\rm Supp}(D_{i})$ in
$C(\overline{K})$.

\noindent
Define $F=(f_{1},\ldots
,f_{n})$ from the complement of the avoidance set to $L'$.
By abuse of notation we use $P_{L}$ to denote the $n$-tuples of divisors
and divisor classes $(D_{1},\ldots ,D_{n})$ and $([D_{1}],\ldots ,[D_{n}])$.
Let $\hat{P}_{L}$ denote the $n$-tuple of elements
$(\lambda [D_{1}],\ldots ,\lambda [D_{n}])$
of $\hat{J}[\hat{\phi}]$.

\subsection{Equivalence of maps}
\label{eom}

In this subsection we
show that $F$ induces a well-defined homomorphism from $J(K)/\phi
A(K)$ to $L^{\ast}/L^{\ast q}$ and that $F$ is related to cohomological
maps used to define a Selmer group. We will return to the algorithm in
subsection~\ref{alg2}.

\vspace{1mm}
\noindent
{\bf Definition:} A good divisor is a divisor of $C$ of degree 0,
defined over $K$ (or $K_{\Gs}$), whose support does not intersect
the avoidance set.

{}From Assumption I, every element of $J(K)/\phi A(K)$ is
represented by a divisor class containing
a divisor of degree 0, defined over $K$.
{}From \cite[Lemma 3, p.\ 166]{La},
every divisor class that contains a divisor defined over $K$, contains a
divisor defined over $K$, whose
support does not intersect any given finite set, in particular, the
avoidance set. So every element of $J(K)/\phi A(K)$ is represented
by a good divisor.

Let $g$ be a $K$-defined function from $C$ to $\overline{K}$.
Let $R=\sum n_{i}R_{i}$ be a divisor of $C$ of degree 0, defined over
$K$, whose support does not intersect the support of $(g)$.
We define \[
g(R)=\prod (g(R_{i}))^{n_{i}}\in K^{\ast}.\]
By abuse of
notation we define the map $F$ from good divisors to $L^{\ast}$ in an
analogous way.

The map $F$ on good divisors, composed with the isomorphism of
$L$ with $\prod_{j\in\Lambda}L_{j}$, is the map
$\prod_{j\in\Lambda}f_{j}$. In examples, we will often denote
$\prod_{j\in\Lambda}L_{j}$ by $L$ and this composition by $F$, since
they are more practical.

\begin{lemma}
\label{well-def}
The map $F$ induces a homomorphism from the subgroup of
$J(K)/q J(K)$ represented by divisor classes containing
good divisors to $L^{\ast}/L^{\ast
q}$.
\end{lemma}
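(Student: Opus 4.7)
The plan is to verify, in sequence, three things: that $F$ is additive on the monoid of good divisors, that $F$ of a $K$-rational principal good divisor lands in $L^{\ast q}$, and that combined with additivity this gives a well-defined map modulo $q$-th powers. The additivity $F(D + D') = F(D) F(D')$ is immediate from the componentwise formula $f_i(\sum n_P P) = \prod f_i(P)^{n_P}$. Before descending, one also has to know that $F(D) \in L^{\ast}$ (i.e.\ lies in the $\Gal(\overline{K}/K)$-invariants of $L'$), and this is where one uses that the tuple $(f_1,\ldots,f_n)$ has been chosen equivariantly: normalise $f_j$ to be defined over $L_j$ for each orbit representative $j \in \Lambda$ (Hilbert~90, as already used in Step~3 to produce the $f_i$) and transport across each orbit by $f_i := f_j^{\sigma_i}$.

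The main step is the principal case. Let $D = (g)$ be a good divisor defined over $K$. A second application of Hilbert~90, to the cocycle $\sigma \mapsto g^{\sigma}/g \in \overline{K}^{\ast}$, lets me assume $g \in K(C)^{\ast}$. Because $D$ is good, the supports of $(g) = D$ and $(f_i) = q D_i$ are disjoint, so Weil reciprocity yields
\[ f_i(D) \;=\; f_i((g)) \;=\; g((f_i)) \;=\; g(q D_i) \;=\; g(D_i)^{q}. \]
A straightforward Galois check, $\tau(g(D_i)) = g^{\tau}(\tau D_i) = g(D_i)$ for $\tau \in \Gal(\overline{K}/L_i)$, shows $g(D_i) \in L_i$, so each coordinate of $F(D)$ is a $q$-th power in $L_i^{\ast}$ and hence $F(D) \in L^{\ast q}$. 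This is the crux of the argument: the relation $(f_i) = q D_i$ together with Weil reciprocity is precisely what produces the $q$-th powers that the Kummer-type target $L^{\ast}/L^{\ast q}$ demands.

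For the general case, suppose $D$ is a good divisor with $[D] = q[E]$ in $J(K)$ for some $E \in {\rm Div}^{0}(C)(K)$. I would apply \cite[Lemma~3, p.\ 166]{La} to replace $E$ by a $K$-linearly-equivalent divisor $E'$ whose support avoids the avoidance set. Then $D - qE'$ is a $K$-rational good principal divisor, so the previous paragraph gives $F(D - qE') \in L^{\ast q}$, and additivity with $F(qE') = F(E')^{q} \in L^{\ast q}$ then yields $F(D) \in L^{\ast q}$, establishing the desired descent to the given subgroup of $J(K)/qJ(K)$. The only genuinely nontrivial ingredient is Weil reciprocity; everything else is bookkeeping, via Hilbert~90 and Lang's moving lemma, to ensure that the functions and divisors involved can be chosen with all the rationality and support properties the construction requires.
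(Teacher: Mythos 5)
Your proof is correct and takes essentially the same route as the paper's: the crux in both is Weil reciprocity applied to a $K$-rational function $h$ with $(h)=D-D'$, yielding $F((h))=h(qP_{L})=(h(P_{L}))^{q}\in L^{\ast q}$, with the needed rationality supplied by Hilbert~90 and the Galois-invariance of $P_{L}$. Your closing paragraph (treating $[D]=q[E]$ for a $K$-rational $E$ moved off the avoidance set) is a correct addition that goes slightly further than the paper's own proof of the lemma, which verifies only invariance under linear equivalence and leaves the full descent to $J(K)/qJ(K)$ to the proof of Theorem~\ref{bigthm}.
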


\begin{proof}
The good divisors form a subgroup of ${\rm Div}^{0}(C)(K)$.
The map $F$ is a homomorphism from good divisors to $L^{\ast}$.
Let $D$ and $D'$ be linearly equivalent good divisors.
We would like to show that $F(D-D')$ is in $L^{\ast q}$.
Let $h$ be a $K$-defined function with $(h)=D-D'$. From
Weil reciprocity, we have the following equalities of $n$-tuples
\[
F(D-D')=F((h))=h((F))=h(qP_{L})=(h(P_{L}))^{q}\in L^{\ast}.\]
Since $P_{L}$ is fixed by $\Gal (\overline{K}/K)$ we know
$h(P_{L})$ is in $L^{\ast}$. So $F(D-D')$ is in $L^{\ast q}$.
\end{proof}

At this point let us
relate the map $F$ to the maps derived from cohomology
which are traditionally used to compute a Selmer group.
First let
us recall the definition of the Weil pairing. Let $\tau$ be an isogeny
of abelian varieties from $B$ to $V$ and $\hat{\tau}$ be the dual isogeny from
$\hat{V}$ to $\hat{B}$. Let $P\in B[\tau]$ and $Q\in \hat{V}[\hat{\tau}]$ and
$D$ be a divisor on $\hat{B}$ representing $P$. There is a function
$g$ on $\hat{V}$ with divisor $\hat{\tau}^{-1}D$. Then $e_{\tau}(P,Q) =
g(X+Q)/g(X)$ for any $X\in \hat{V}$ for which the right hand side of
the equation is defined.

Let $\mu_{q}(L')$ be the $q$th
roots of unity in $L'$.
We have \[
\mu_{q}(L')\cong \mu_{q}
(\overline{K_{1}})\times \ldots \times
\mu_{q}(\overline{K_{n}}).\]
Let $e_{\phi}(P,Q)$ denote the $\phi$-Weil pairing
of $P\in A[\phi]$ and $Q\in \hat{J}[\hat{\phi}]$.
Define the map $w$ from $A[\phi]$ to $\mu_{q}(L')$ by
\[
w(P)=e_{\phi}(P,\hat{P}_{L})=(e_{\phi}(P,\lambda [D_{1}]),\ldots
,e_{\phi}(P,\lambda [D_{n}])).\]

\begin{prop}
\label{weil}
The map $w$ from $A[\phi]$ to $\mu_{q}(L')$ is injective and defined over $K$.
\end{prop}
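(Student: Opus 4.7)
The plan is to verify the two assertions separately, with injectivity following from non-degeneracy of the Weil pairing and Galois equivariance following from the way the action on $L'$ was rigged in Step 3.

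For injectivity, I would use the fact that $\lambda$ restricts to an isomorphism $\Psi \stackrel{\sim}{\to} \hat{J}[\hat{\phi}]$, together with the fact that $\{[D_i]\}$ was chosen in Step 2 to span $\Psi$. Hence $\{\lambda[D_i]\} = \{\hat{P}_{L,i}\}$ spans $\hat{J}[\hat{\phi}]$. Suppose $w(P) = 1$, i.e.\ $e_\phi(P,\lambda[D_i]) = 1$ in each coordinate. By bilinearity of the Weil pairing in the second argument, $e_\phi(P,Q) = 1$ for every $Q \in \hat{J}[\hat{\phi}]$. The $\phi$-Weil pairing $A[\phi]\times \hat{J}[\hat{\phi}] \to \mu_q$ is non-degenerate, so $P = 0$.

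For the claim that $w$ is defined over $K$, I would check Galois equivariance of $w$ with respect to the action on $L'$ defined in Section~\ref{alg1}. Let $\sigma \in \Gal(\overline{K}/K)$, and recall that $\overline{\sigma}\in S_n$ is defined by $\ssigma D_i = D_{\overline{\sigma}i}$. Using the standard Galois compatibility of the Weil pairing, $\ssigma e_\phi(P,Q) = e_\phi(\ssigma P, \ssigma Q)$, together with the fact that $\lambda$ is defined over $K$ (as noted in the setup), the $i$th coordinate of $\ssigma w(P)$ is
\[
\ssigma e_\phi\bigl(P,\lambda[D_{\overline{\sigma}^{-1}i}]\bigr)
= e_\phi\bigl(\ssigma P,\lambda[\ssigma D_{\overline{\sigma}^{-1}i}]\bigr)
= e_\phi\bigl(\ssigma P,\lambda[D_i]\bigr),
\]
which is exactly the $i$th coordinate of $w(\ssigma P)$. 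Thus $\ssigma w(P) = w(\ssigma P)$, i.e.\ $w$ is $\Gal(\overline{K}/K)$-equivariant, which is the statement that $w$ is defined over $K$.

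I do not expect a genuine obstacle here; the whole point of the funny Galois action on $L'$ introduced in Section~\ref{alg1} is precisely to make $w$ equivariant. The only item requiring care is the index bookkeeping in the equivariance calculation, making sure that the permutation $\overline{\sigma}^{-1}$ appearing in the definition of the action on $L'$ cancels against $\ssigma D_{\overline{\sigma}^{-1}i} = D_i$. Injectivity is then an immediate consequence of having chosen the $[D_i]$ to span $\Psi$, which is the very reason Step 2 was arranged that way.
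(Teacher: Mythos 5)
Your proposal is correct and follows essentially the same route as the paper: injectivity from the fact that the $\lambda[D_i]$ span $\hat{J}[\hat{\phi}]$ together with non-degeneracy of the Weil pairing, and Galois equivariance by the same coordinate-by-coordinate computation with $\overline{\sigma}^{-1}$ cancelling against $\ssigma D_{\overline{\sigma}^{-1}i}=D_i$. No substantive differences.
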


\begin{proof}
Since the elements of the $n$-tuple $\hat{P}_{L}$ span $\hat{J}[\hat{\phi}]$,
the map $w$ is injective from the non-degeneracy of the Weil pairing.
Let $\sigma\in \Gal (\overline{K}/K)$. We would like to show that
$\ssigma w(P) = w(\ssigma P)$.
We have \[
w(\ssigma P) = (e_{\phi}(\ssigma P,\lambda [D_{1}]),\ldots  ,
e_{\phi}(\ssigma P,\lambda [D_{n}]))\]
and \[
\ssigma w(P) = \ssigma (e_{\phi}(P,\lambda [D_{1}]),\ldots ,e_{\phi}(P,\lambda
[D_{n}])) = (\ssigma e_{\phi}(P,\lambda
[D_{\overline{\sigma}^{-1}1}]),
\ldots ,
\ssigma e_{\phi}(P,\lambda [D_{\overline{\sigma}^{-1}n}]))\]\[
= (e_{\phi}(\ssigma P,\lambda [\ssigma D_{\overline{\sigma}^{-1}1}]),
\ldots ,
e_{\phi}(\ssigma P,\lambda [\ssigma D_{\overline{\sigma}^{-1}n}]))
= (e_{\phi}(\ssigma P,\lambda [D_{1}]),\ldots ,
e_{\phi}(\ssigma P,\lambda [D_{n}])).\]
\end{proof}
The map $w$ induces a map from $H^{1}(K,A[\phi])$ to
$H^{1}(K,\mu_{q}(L'))$ which we also call $w$.
{}From \cite[p.\ 152]{Se}, $H^{1}(K,\mu_{q}
(L'))\cong
L^{\ast}/L^{\ast q}$
by a map we call $k$.
The map $k$ sends the cocycle class containing $(\sigma\mapsto
\ssigma (\sqrt[q]{l})/\sqrt[q]{l})$ to $l\in L^{\ast}$.

\begin{thm}
\label{bigthm}
The maps
$F$ and $k\circ w\circ\delta$ are the same as maps
{}from $J(K)/\phi A(K)$ to $L^{\ast}/L^{\ast q}$.
\end{thm}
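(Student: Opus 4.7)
Fix $Q\in J(K)/\phi A(K)$. By Assumption I and the remark following the definition of a good divisor, $Q$ is the class of a good divisor $D\in{\rm Div}^{0}(C)(K)$. Pick any $\tilde Q\in A(\overline{K})$ with $\phi\tilde Q=Q$; then $\delta(Q)$ is represented by the cocycle $\sigma\mapsto \xi_{\sigma}:=\ssigma\tilde Q-\tilde Q\in A[\phi]$, so $(w\circ\delta)(Q)$ is represented by $\sigma\mapsto(e_{\phi}(\xi_{\sigma},\lambda[D_{i}]))_{i=1}^{n}\in\mu_{q}(L')$. By the description of the Kummer map $k$, it is enough to exhibit elements $m_{i}\in\overline{K_{i}}^{\ast}$ with $m_{i}^{q}=f_{i}(D)$ for which the cocycle $\sigma\mapsto\ssigma(m_{1},\ldots,m_{n})/(m_{1},\ldots,m_{n})$ agrees, component by component, with the Weil-pairing cocycle above. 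Then $(m_{i})_{i}\in L'^{\ast}$ is a $q$-th root of $F(Q)=(f_{i}(D))_{i}\in L^{\ast}$ whose Kummer class is $(w\circ\delta)(Q)$, giving $k(w(\delta(Q)))=F(Q)$ in $L^{\ast}/L^{\ast q}$.

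\textbf{Reduction to $J$ and the key construction.} Since $A[\phi]$ has exponent $q$, the map $[q]_{A}$ kills $A[\phi]$ and therefore factors through $\phi$: there is an isogeny $\psi\colon J\to A$, defined over $K$, with $\psi\circ\phi=[q]_{A}$ and hence also $\phi\circ\psi=[q]_{J}$. Using $q$-divisibility of $J(\overline{K})$, choose $R\in J(\overline{K})$ with $qR=Q$ and take $\tilde Q:=\psi(R)$, so that $\phi\tilde Q=\phi\psi R=qR=Q$ and $\xi_{\sigma}=\psi(\ssigma R-R)$ with $\ssigma R-R\in J[q]$. The standard compatibility of the Weil pairing with composition of isogenies yields
\[
e_{[q]_{J}}(P,\lambda[D_{i}])\;=\;e_{\psi}(P,\hat\phi(\lambda[D_{i}]))\cdot e_{\phi}(\psi P,\lambda[D_{i}]),
\]
and the first factor is $1$ because $\hat\phi(\lambda[D_{i}])=0$ (as $\lambda[D_{i}]\in\hat J[\hat\phi]$). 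Taking $P=\ssigma R-R$ and identifying $e_{[q]_{J}}(\cdot,\lambda\,\cdot)$ with the usual $q$-Weil pairing $e_{q}$ on $J$ via the self-duality $\lambda$, we get $e_{\phi}(\xi_{\sigma},\lambda[D_{i}])=e_{q}(\ssigma R-R,[D_{i}])$. Now write $R=[\tilde D]$ for $\tilde D\in{\rm Div}^{0}(C)(\overline{K})$ chosen (by moving within its linear equivalence class) to have support disjoint from ${\rm supp}(D)\cup\bigcup_{i}{\rm supp}(D_{i})$; then $q\tilde D=D+(g)$ for some $g\in\overline{K}(C)^{\ast}$, and $q(\ssigma\tilde D-\tilde D)=(\ssigma g/g)$ because $D$ is defined over $K$. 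Set $m_{i}:=f_{i}(\tilde D)/g(D_{i})$. Weil reciprocity, applied to $f_{i}$ and $g$, gives $f_{i}((g))=g((f_{i}))=g(qD_{i})=g(D_{i})^{q}$, and therefore
\[
m_{i}^{q}\;=\;\frac{f_{i}(\tilde D)^{q}}{g(D_{i})^{q}}\;=\;\frac{f_{i}(q\tilde D)}{g(qD_{i})}\;=\;\frac{f_{i}(D)\cdot f_{i}((g))}{g((f_{i}))}\;=\;f_{i}(D).
\]
The classical formula for the $q$-Weil pairing on a Jacobian, applied to the divisors $\ssigma\tilde D-\tilde D$ and $D_{i}$ whose $q$-fold multiples have been written as the principal divisors $(\ssigma g/g)$ and $(f_{i})$, then produces $e_{q}(\ssigma R-R,[D_{i}])=\ssigma m_{i}/m_{i}$ by direct substitution.

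\textbf{Main obstacle: Galois bookkeeping.} The delicate point is matching the twisted $\Gal(\overline{K}/K)$-action on $L'=\prod\overline{K_{i}}$ of Section~\ref{alg1}: the $i$-th component of $\ssigma(m_{1},\ldots,m_{n})$ is $\ssigma m_{\overline\sigma^{-1}i}$ rather than $\ssigma m_{i}$, so the $f_{i}$ and $m_{i}$ must be chosen compatibly with the Galois orbit structure of the $D_{i}$. This is arranged by fixing representatives $\sigma_{i}$ on each orbit with $\ssigmai D_{1}=D_{i}$ and setting $f_{i}=\ssigmai f_{1}$ and $m_{i}=\ssigmai m_{1}$ (both well defined modulo $q$-th powers, which is harmless since the only ambiguity in $f_{i}$ and $g$ lies in multiplication by constants in $\overline{K}^{\ast}$ and $D,D_{i}$ have degree $0$). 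Under this normalisation the component-wise Weil-pairing cocycle from the previous step coincides with $\sigma\mapsto\ssigma(m_{1},\ldots,m_{n})/(m_{1},\ldots,m_{n})\in\mu_{q}(L')$, and its Kummer image, transported through the isomorphism $L\cong\prod_{j\in\Lambda}L_{j}$ of Section~\ref{alg1}, is exactly $F(Q)$. Independence of $k(w(\delta(Q)))$ from the auxiliary choices is automatic on the cohomological side, while on the $F$-side it is Lemma~\ref{well-def}; together these give $F=k\circ w\circ\delta$ as maps $J(K)/\phi A(K)\to L^{\ast}/L^{\ast q}$.
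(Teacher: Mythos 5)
Your proof is correct and follows essentially the same route as the paper: you use the complementary isogeny $\psi$ with $\phi\circ\psi=[q]_J$ (the paper's $\tau$), the compatibility $e_q(R,\hat P_L)=e_\phi(\tau R,\hat P_L)$, the explicit divisor-theoretic formula for $e_q^\lambda$, and Weil reciprocity; your element $m_i=f_i(\tilde D)/g(D_i)$ is exactly the paper's $\beta=F(Q)/g(P_L)$. The only differences are cosmetic (you fold the paper's two-step reduction from $\delta_q$ to $\delta_\phi$ into one computation, and you spell out the Galois equivariance of the $f_i$, which the paper leaves implicit).
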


\begin{proof}
Let $\delta_{q}$ be the coboundary map from $J(K)/qJ(K)$ to $H^{1}(K,
J[q])$.
Let $w_{q}$ be the map from $J[q]$
to $\mu_{q}(L')$ that sends $R\in J[q]$ to
$e_{q}(R,\hat{P}_{L})$. For clarity we redenote $\delta$ and
$w$ by
$\delta_{\phi}$ and $w_{\phi}$.
We first show that the map $F$ is the same as
the composition $k\circ w_{q}\circ \delta_{q}$
on the subgroup of $J(K)/qJ(K)$ of elements represented by
divisor classes containing good divisors.
Let $P$ be a good divisor representing such an element of $J(K)/qJ(K)$.
{}From
Lemma~\ref{well-def}, the choice of such a $P$ is unimportant.
{}From \cite[Lemma 3, p.\ 166]{La}, we can pick a degree 0 divisor $Q$,
whose support does not intersect the
avoidance set, and for which $qQ$ is linearly equivalent to $P$. The
class of cocycles $\delta_{q} ([P])$ includes the cocycle
$(\sigma\mapsto [\ssigma Q-Q])$ with $\sigma\in\Gal (\overline{K}/K)$.
So $w_{q}\circ
\delta_{q} ([P])$ is
the class of cocycles that includes $(\sigma\mapsto
e_{q}([\ssigma Q-Q],\hat{P}_{L}))$.
Let
$e_{q}^{\lambda}(S,T)=e_{q}(S,\lambda T)$ for $S,T\in J[q]$.
The $e_{q}^{\lambda}$-Weil pairing can be defined as follows. If
$h_{1}$ and $h_{2}$ are functions on $C$ with divisors $qE_{1}$ and
$qE_{2}$ respectively, with disjoint supports,
then $e_{q}^{\lambda}([E_{1}],[E_{2}])=
h_{2}(E_{1})/h_{1}(E_{2})$.
Let $(g)=qQ-P$ with $g$ defined over $K(Q)$.
We have $(\ssigma g)=q\ssigma Q-P$ and
so $(\ssigma g/g)=q\ssigma Q-qQ$.
Recall $(f_{i})=qD_{i}$.
We have
\[
e^{\lambda}_{q}([\ssigma Q-Q],[D_{i}])
=\frac{f_{i}(\ssigma Q-Q)}
{\ssigma g/g(D_{i})}.\]
Thus we have the following equalities of $n$-tuples \[
e_{q}([\ssigma Q-Q],\hat{P}_{L})=
\frac{F(\ssigma Q-Q)}{\ssigma g/g(P_{L})}
=\frac{\ssigma \beta}{\beta}\]
where $\beta=F(Q)/g(P_{L})$. So we have\[
k\circ w_{q}\circ
\delta_{q} ([P])\equiv \beta^{q} \equiv \frac{F(qQ)}
{g(qP_{L})}\equiv \frac{F(qQ)}{F(qQ-P)}\equiv F(P)\; {\rm mod}\;
L^{\ast q}.\]

Let us show that $F$ and $k\circ w_{\phi}\circ\delta_{\phi}$ are
the same as maps from $J(K)/\phi A(K)$ to $L^{\ast}/L^{\ast q}$.
It follows from Assumption I that every element of $J(K)/\phi A(K)$
is represented by a good divisor.
There is an isogeny $\tau : J\rightarrow A$ with $\phi\circ\tau =q$.
{}From the
commutative diagram\[
\begin{array}{ccccccccc}
0 & \rightarrow & J[q] & \rightarrow & J(\overline{K}) & \stackrel{q}
{\rightarrow} & J(\overline{K}) & \rightarrow & 0 \\
 & & \downarrow \tau & & \downarrow \tau & & \downarrow 1 & & \\
0 & \rightarrow & A[\phi] & \rightarrow & A(\overline{K}) & \stackrel
{\phi}{\rightarrow} & J(\overline{K}) & \rightarrow & 0 \end{array}\]
we get the following commutative diagram by taking $\Gal (\overline{K}/
K)$-invariants.
\[
\begin{array}{ccc}
J(K)/qJ(K) & \stackrel{\delta_{q}}{\rightarrow} & H^{1}(K,J[q]) \\
\downarrow & & \downarrow \tau \\
J(K)/\phi A(K) & \stackrel{\delta_{\phi}}{\rightarrow} & H^{1}(K,A[\phi])
\end{array}\]

{}From the compatibility of Weil pairings
we have $e_{q}(R,\hat{P}_{L})=
e_{\phi}(\tau(R),\hat{P}_{L})$. Thus the
triangle of the following diagram commutes and so the whole diagram
commutes.
\[
\begin{array}{ccccccc}
J(K)/qJ(K) & \stackrel{\delta_{q}}{\rightarrow} &
H^{1}(K,J[q]) &
 \searrow w_{q} & & & \\
\downarrow
 & & \downarrow \tau & &
H^{1}(K,\mu_{q}(L'))
& \stackrel{k}{\rightarrow} & L^{\ast}/
L^{\ast q} \\
J(K)/\phi A(K) & \stackrel{\delta_{\phi}}{\rightarrow} &
H^{1}(K,A[\phi]) & \nearrow w_{\phi} & & &
\end{array}\]
{}From commutivity, $F$ must factor through $\phi A(K)$ and
$F$ and $k\circ w_{\phi}\circ\delta_{\phi}$ are the same as maps
{}from $J(K)/\phi A(K)$ to $L^{\ast}/L^{\ast q}$.
\end{proof}

\noindent
Inspiration for this proof can be found in \cite{Li} and
\cite[Lemma 2.2]{Mc}.

Note that Lemma~\ref{well-def} and Theorem~\ref{bigthm} hold if we
replace $K$ and $L'$ by any field ${\cal K}$ (containing $K$)
and $L'\otimes_{K}{\cal K}$.
Let $\Gs\in S$ and $L_{\Gs}=L\otimes_{K}K_{\Gs}$. The map $F$ induces
a map $F_{\Gs}$ from $J(K_{\Gs})/\phi A(K_{\Gs})$ to $L_{\Gs}^{\ast}/
L_{\Gs}^{\ast q}$.
In order to compute the Selmer group, we want the maps $F$ and
$F_{\Gs}$ to be injective. The maps $\delta$,
$\delta_{\Gs}$, $k$ and $k_{\Gs}$ are injective automatically.
Let us make an assumption that makes $w$ and $w_{\Gs}$
injective also.

Let ${\cal K}$ be any field containing $K$ and $\overline{\cal K}$ be an
algebraic closure.
Let coker be defined to make the following an
exact sequence of $\Gal (\overline{\cal K}/{\cal K})$-modules. \[
0\rightarrow A[\phi]\stackrel{w}{\rightarrow} \mu_{q}(L')
\rightarrow {\rm coker} \rightarrow 0\]
In addition, let ${\cal K}'$ denote the minimal field of
definition, over ${\cal K}$, of the $D_{i}$'s.

We have
\[
H^{1}({\cal K},A[\phi])\stackrel{w}{\rightarrow}
H^{1}({\cal K},\mu_{q}(L'))\; {\rm is}\;
{\rm injective}\]
\[
\Leftrightarrow \mu_{q}(L')({\cal K})\rightarrow {\rm coker}({\cal K})\;
{\rm is} \; {\rm surjective} \]
\[
\Leftrightarrow H^{1}(\Gal ({\cal K}'/{\cal K}),
A[\phi])\stackrel{w}{\rightarrow}
H^{1}(\Gal ({\cal K}'/{\cal K}),\mu_{q}(L'))\; {\rm is} \; {\rm injective.}\]

\vspace{3mm}
\noindent
{\bf Assumption II}:
The maps $H^{1}(G,A[\phi])\stackrel{w}{\rightarrow}
H^{1}(G,\mu_{p}(L'))$ are injective for $G=\Gal (K'/
K)$ and $G=\Gal (K_{\Gs}'/
K_{\Gs})$ with $\Gs\in S$.

\noindent
This assumption guarantees that
the maps $F$ and $F_{\Gs}$ are injections.

\subsection{Computing the Selmer group}
\label{alg2}

\vspace{3mm}
\noindent
{\bf Step 4.} Find a set $S$

The set of primes of $K$ denoted $S$ must include
the primes dividing the conductor of $A$,
the primes over $p$,
and if $p=2$, the real primes.
The primes dividing the conductor of $A$ are the same as those
dividing the conductor of $J$. These are a subset of the primes at which
the reduction of $C$ is singular.
It is easier, in general, to determine the primes at which the reduction
of $C$ is singular
(see \cite[chap.\ 1, \S 5]{Ha}), than the
primes dividing the conductor of
$J$. So, for simplicity, we can
include in $S$ all of the primes at which the reduction of $C$ is singular.

\vspace{3mm}
\noindent
{\bf Step 5.} Determine the image of $H^{1}(K,A[\phi];S)$ in
$L^{\ast}/L^{\ast q}$ and find generators of that image.

We have $L\cong \prod_{j\in\Lambda} L_{j}$ where the $L_{j}$ are fields.
Thus $L^{\ast}/L^{\ast q}$
is isomorphic to $\prod_{j\in\Lambda} L_{j}^{\ast}/L_{j}^{\ast q}$.

\vspace{1mm}
\noindent
{\bf Definition:}
Let $L_{j}(S,q)$ be
the subgroup of $L_{j}^{\ast}/L_{j}^{\ast q}$ of elements with
the property that if we adjoin the $q$th root of a representative
to $L_{j}$, that we get an extension unramified outside of primes
over primes of $S$.
Let $L(S,q)=\prod_{j\in\Lambda} L_{j}(S,q)$.

\noindent
Since we are making Assumption II, we have \[
H^{1}(K,A[\phi])\cong {\rm ker}:H^{1}(K,\mu_{q}(L'))\rightarrow
H^{1}(K,{\rm coker})\]
and
\[
H^{1}(K,A[\phi];S)\cong {\rm ker}:H^{1}(K,\mu_{q}(L');S)\rightarrow
H^{1}(K,{\rm coker})\cong {\rm ker}:L(S,q)\rightarrow H^{1}(K,{\rm coker}).\]

By abuse of notation, we refer to the subgroup of $L(S,q)$ above
as $H^{1}(K,A[\phi];S)$.
Let $\beta_{\Gs}$ be the natural map from $L^{\ast}/L^{\ast q}$ to
$L_{\Gs}^{\ast}/L_{\Gs}^{\ast q}$.
The image of $J(K)/\phi A(K)$ in $H^{1}(K,A[\phi])$ actually
lies in $H^{1}(K,A[\phi];S)$ and the following diagram commutes
(see \cite[p.\ 92]{Mi3}).
\[
\begin{array}{ccc}
J(K)/\phi A(K) & \stackrel{F}{\hookrightarrow} & H^{1}(K,A[\phi];S) \\
\downarrow & & \downarrow\prod\beta_{\Gs} \\
\prod\limits_{\Gs\in S}
J(K_{\Gs})/\phi A(K_{\Gs}) & \stackrel{\prod F_{\Gs}}{\hookrightarrow}
 & \prod\limits_{\Gs\in S}L_{\Gs}^{\ast}/L_{\Gs}^{\ast q}\end{array}\]
{}From Theorem~\ref{bigthm} and the injectivity of $k\circ w$
the Selmer group, $S^{\phi}(K,A)$,
is isomorphic to
the intersection of the groups
$\beta_{\Gs}^{-1}(F_{\Gs}
(J(K_{\Gs})/\phi A(K_{\Gs})))$ for
all $\Gs\in S$.

\vspace{3mm}
\noindent
{\bf Step 6.}
Find generators for $J(K_{\Gs})/\phi A(K_{\Gs})$ and their images
under $F_{\Gs}$, in $L_{\Gs}^{\ast}/L_{\Gs}^{\ast q}$, for all $\Gs\in S$.

For representatives, we find $K_{\Gs}$-rational, degree 0 divisors. It may
be necessary to shift their supports so that we have good divisors.
To check
if the classes of good divisors are independent in $J(K_{\Gs})/\phi
A(K_{\Gs})$, it is easiest to check if their images under the injective
map $F_{\Gs}$ are independent in $L_{\Gs}^{\ast}/L_{\Gs}^{\ast q}$.
A deterministic algorithm for finding such generators for the Jacobians
of curves of genus 2 and the multiplication by 2 map is given in \cite{St1}.

We need to know how many generators are needed.
Let $C$ have genus $g$ and $\Gs$ be a finite prime of $K$.
Recall $q=p^{l}$. 
For some sufficiently large $m$, the neighborhoods $A_m (K_\Gs)$ and $J_m (K_\Gs)$
of the 0-points are isomorphic and the isogeny $\phi$ can be written as a $g$-tuple of power
series in $g$ variables. 
For elements $k\in K_\Gs$, we define the normalized absolute value $|k|$ such that if $\pi$ is a prime of $K_\Gs$ and ${\bf F}_\Gs$ is the residue class field of $K_\Gs$, then
$|\pi|=(\# {\bf F}_{\Gs})^{-1}$.  Let $|\phi'(0)|$ be the normalized absolute value
of the determinant of the Jacobian matrix associated to the above power series
for $\phi$, evaluated at the 0-point.
Let $c_A$ and $c_J$ be the Tamagawa numbers of $A$ and $J$.

\begin{prop}
\label{size}
Assume $\ell$ is a prime number and $\Gs | \ell$. Let $r={\rm ord}_\ell (q)$.
Then
$\# J(K_{\Gs})/q J(K_{\Gs}) = p^{gr[K_{\Gs}:{\bf Q}_{p}]}\cdot \#
J(K_{\Gs})[q]$.
More generally, $\# J(K_{\Gs})/\phi A(K_{\Gs})$
$=|\phi'(0)|^{-1}\cdot \# A(K_\Gs)[\phi]\cdot c_J/c_A$.
\end{prop}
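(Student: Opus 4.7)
The first formula is a specialisation of the second: with $A=J$ and $\phi=[q]$ we have $c_A=c_J$ and $\phi'(0)=q\cdot I_g$, so $|\phi'(0)|^{-1}=|q|^{-g}$. For $\ell\ne p$, $r=0$ and $|q|=1$; for $\ell=p$, $r=l$ and $|q|=p^{-l[K_{\Gs}:{\bf Q}_p]}$. Either way, $|\phi'(0)|^{-1}=p^{gr[K_{\Gs}:{\bf Q}_p]}$, and the first formula follows from the general one. For the general formula, introduce for any continuous homomorphism $\psi$ of compact abelian groups with finite kernel and cokernel the Euler characteristic $\chi(\psi)=\#{\rm coker}(\psi)/\#\ker(\psi)$; it is multiplicative along short exact sequences. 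For $\phi:A(K_{\Gs})\to J(K_{\Gs})$ the identity $\chi(\phi)=\#\bigl(J(K_{\Gs})/\phi A(K_{\Gs})\bigr)/\#A(K_{\Gs})[\phi]$ reduces the claim to showing $\chi(\phi)=|\phi'(0)|^{-1}\,c_J/c_A$.

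Decompose $A(K_{\Gs})$ via the N\'eron model: $\hat{A}(\Gp)\subseteq A^{0}(K_{\Gs})\subseteq A(K_{\Gs})$, with quotients the points $\mathcal{A}^{0}_{s}({\bf F}_{\Gs})$ of the identity component of the special fibre and the component group $\Phi_{A}$ of order $c_A$; analogously for $J$. Since $\phi$ extends to the N\'eron models by the universal property, it respects the filtrations, and multiplicativity yields $\chi(\phi)=\chi\bigl(\phi|_{\hat{A}(\Gp)}\bigr)\cdot\chi(\phi^{0}_{s})\cdot\chi(\phi_{*})$, the three factors being the maps induced on formal groups, on identity components of the special fibres, and on component groups. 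The component-group factor equals $c_J/c_A$ trivially, since $\chi$ of any map of finite abelian groups is the ratio of orders. The formal-group factor is obtained by passing to $\hat{A}(\Gp^{m})$, $\hat{J}(\Gp^{m})$ with $m$ so large that the formal logarithm converges and identifies them with free $O_{\Gs}$-modules of rank $g$; the restriction of $\phi$ then becomes $O_{\Gs}$-linear with matrix $\phi'(0)$, and the standard change-of-Haar-measure computation gives $\chi=|\det\phi'(0)|^{-1}=|\phi'(0)|^{-1}$.

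The main obstacle is the middle factor, together with the bookkeeping of the finite quotients $\hat{A}(\Gp)/\hat{A}(\Gp^{m})$ and $\hat{J}(\Gp)/\hat{J}(\Gp^{m})$ set aside in the previous step. When $\ell\ne p$ one has $\chi(\phi^{0}_{s})=1$ directly from Lang's theorem, which gives $H^{1}({\bf F}_{\Gs},G)=0$ for any connected commutative algebraic group $G$ over the finite residue field, so that the kernel and the first cohomology of the finite \'etale kernel cancel in $\chi$. When $\ell=p$ the kernel of $\phi^{0}_{s}$ may have an infinitesimal part, and the correct accounting replaces counts of ${\bf F}_{\Gs}$-points by orders of finite flat group schemes; the quotients of formal groups set aside above fit into this infinitesimal contribution exactly so that the middle factor still equals $1$. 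Assembling the three factors then gives $\chi(\phi)=|\phi'(0)|^{-1}\,c_J/c_A$, as required. This $\ell=p$ bookkeeping is, presumably, the source of the original error being corrected in this version of the paper.
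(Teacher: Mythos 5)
Your proposal is correct, and it follows essentially the route the paper itself indicates: the paper's ``proof'' is a two-sentence citation of the snake lemma together with Mattuck's theorem that $J(K_{\Gs})$ contains a finite-index subgroup isomorphic to $O_{\Gs}^{g}$ ($O_{\Gs}$ the ring of integers of $K_{\Gs}$), with the details delegated to \cite[Lemma 3.8, Prop. 3.9]{Sc2}; your Euler-characteristic multiplicativity plus formal-logarithm computation is exactly that argument written out. Two remarks. First, for the first formula the N\'eron-model machinery is overkill: multiplication by $q$ is an endomorphism of $J(K_{\Gs})$ preserving the finite quotient $J(K_{\Gs})/O_{\Gs}^{g}$, and $\chi$ of an endomorphism of a finite group is $1$, so Mattuck plus the snake lemma already give $\# J(K_{\Gs})/qJ(K_{\Gs})=|q|^{-g}\cdot\# J(K_{\Gs})[q]$ directly. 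Second, the one step you assert rather than prove --- that your middle factor equals $1$ when $\ell=p$ --- is closed more cheaply than by counting finite flat group schemes: for \emph{any} homomorphism $f\colon B\rightarrow D$ of finite abelian groups one has $\chi(f)=\# D/\# B$ independently of $f$, so after peeling off the formal subgroups $\hat{A}(\Gp^{m})$, $\hat{J}(\Gp^{m})$ and the component groups, the remaining factor is simply $\#{\cal J}^{0}_{s}({\bf F}_{\Gs})\,(\#{\bf F}_{\Gs})^{g(m-1)}$ divided by the corresponding expression for $A$, and this is $1$ because $\#{\cal A}^{0}_{s}({\bf F}_{\Gs})=\#{\cal J}^{0}_{s}({\bf F}_{\Gs})$: the abelian, toric and unipotent parts of the special fibres of isogenous abelian varieties are respectively isogenous (resp.\ of equal dimension), and the ${\bf F}_{\Gs}$-point count of each is an isogeny invariant. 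This treats $\ell=p$ and $\ell\neq p$ uniformly, makes your appeal to Lang's theorem unnecessary (you would in any case need to justify that $\phi$ induces an isogeny on special fibres, which is delicate for bad reduction), and eliminates the infinitesimal bookkeeping you flag as the main obstacle.
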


Both statements can be shown using the snake lemma and the fact that
$J(K_{\Gs})$ contains a subgroup of finite index isomorphic to $g$
copies of the ring of integers in $K_{\Gs}$ (see \cite{Ma} and
\cite[Lemma 3.8, Prop. 3.9]{Sc2}).
Of course, $\# J(\overline{K_{\Gs}})[q]=q^{2g}$.
If $\phi$ is not a multiplication
by $q$ map, then the computation of
$\# J(K_{\Gs})/\phi A(K_{\Gs})$ is not always trivial. This is
discussed in \cite[\S 3]{Sc2}, where an algorithm is given, in the
case that $J$ is an elliptic curve.  In certain other cases it can be
accomplished, as in Corollary~\ref{localsize}.

If $p=2$, then $S$ includes real primes.

\begin{prop}
\label{sizeR}
If $q=2^{l}$ and
$J$ is defined over ${\bf R}$, then
$\# J({\bf R})/qJ({\bf R})= q^{-g}\cdot \# J({\bf R})[q]$.
\end{prop}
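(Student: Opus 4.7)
The strategy is to exploit the fact that $J({\bf R})$ is a compact commutative real Lie group, so the question splits cleanly into a computation on its identity component and one on its finite component group.

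First I recall the classical description: writing $J({\bf C}) = {\bf C}^{g}/\Lambda$ and letting complex conjugation act, one sees that $J({\bf R})$ is a compact real Lie group of real dimension $g$ whose identity component $J({\bf R})^{0}$ is isomorphic to the real torus $({\bf R}/{\bf Z})^{g}$, while the component group $\pi := J({\bf R})/J({\bf R})^{0}$ is finite (in fact an elementary abelian $2$-group). This structure theorem is the only non-formal input.

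Next I apply the snake lemma to the short exact sequence
\[
0 \rightarrow J({\bf R})^{0} \rightarrow J({\bf R}) \rightarrow \pi \rightarrow 0
\]
with vertical multiplication-by-$q$ maps. On $({\bf R}/{\bf Z})^{g}$, multiplication by $q$ is surjective with kernel $({\bf Z}/q{\bf Z})^{g}$, so $\# J({\bf R})^{0}[q] = q^{g}$ and $J({\bf R})^{0}/qJ({\bf R})^{0}=0$. The snake-lemma sequence therefore yields an exact sequence
\[
0 \rightarrow ({\bf Z}/q{\bf Z})^{g} \rightarrow J({\bf R})[q] \rightarrow \pi[q] \rightarrow 0
\]
together with an isomorphism $J({\bf R})/qJ({\bf R}) \cong \pi/q\pi$.

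Finally, because $\pi$ is a finite abelian group, $\#\pi[q] = \#\pi/q\pi$. Putting the pieces together,
\[
\# J({\bf R})[q] \;=\; q^{g}\cdot \#\pi[q] \;=\; q^{g}\cdot \#\pi/q\pi \;=\; q^{g}\cdot \# J({\bf R})/qJ({\bf R}),
\]
which rearranges to the claimed identity $\# J({\bf R})/qJ({\bf R}) = q^{-g}\cdot \# J({\bf R})[q]$. The only real obstacle is citing (or briefly justifying) the Lie-group description of $J({\bf R})$; after that the rest is a routine diagram chase, using the surjectivity of multiplication by $2$ on a circle and the tautology $\#F[q]=\#F/qF$ for any finite abelian $F$.
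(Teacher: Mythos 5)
Your argument is correct and is essentially the proof the paper has in mind: the paper only cites \cite[Prop.\ 5.4]{Sc2}, whose argument is exactly this snake-lemma computation using the fact that $J({\bf R})$ has a finite-index subgroup isomorphic to $({\bf R}/{\bf Z})^{g}$ (the real analogue of the statement the author invokes for the finite-prime case in Proposition~\ref{size}). Nothing is missing; the structure theorem for $J({\bf R})$ as a compact commutative Lie group is the standard input and your diagram chase is complete.
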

For the proof, simply replace 2 by $q=2^{l}$ in
\cite[Prop.\ 5.4]{Sc2}
where one can find
discussions of more general isogenies of even degree at real
primes.

\vspace{2mm}

\noindent
{\bf Step 7.} Find the intersection in $H^{1}(K,A[\phi];S)$
of $\beta_{\Gs}^{-1}(F_{\Gs}(J(K_{\Gs})/
\phi A(K_{\Gs})))$ for all $\Gs\in S$.

At this point we have accomplished our goal of computing the Selmer group.
One reason that Selmer groups are computed is for the purpose of
bounding the Mordell-Weil rank.
The group $J(K)$ is called the Mordell-Weil group of $J$ over $K$.
It is a finitely generated abelian group and its free {\bf Z}-rank
is called the Mordell-Weil rank of $J$ over $K$.
In order to find the Mordell-Weil rank, we need to find
elements of $J(K)/\phi A(K)$ and map them
to $L^{\ast}/L^{\ast q}$.
One can save time by doing this before Step 6
since elements of $J(K)/\phi A(K)$ map to elements in each $J(K_{\Gs})/
\phi A(K_{\Gs})$.
Let $\Sh (K,A)[\phi]$ denote the
$\phi$-torsion of the Tate-Shafarevich group for $A$ over $K$.
We hope to generate all of the kernel from
$S^{\phi}(K,A)$ to
$\Sh (K,A)[\phi]$ (assuming you know what $\Sh (K,A)[\phi]$ is!) This
kernel is isomorphic to $J(K)/\phi A(K)$. If we have success, then
we can attempt to compute the
Mordell-Weil rank of $J(K)$.

Let
$\phi'$ be an isogeny from $J$ to $A$ for which
$\phi\circ\phi'=\tau$ and $\tau^{t}=mu$ for some unit $u$ in
${\rm End}(J)$,
and integers $t$ and $m=q^{j}$. In addition, assume that
we are able to compute $A(K)/\phi' J(K)$ (at this point it would
be helpful if $A$ were a Jacobian).
The following proposition contains an exact sequence which helps
combine the sizes of these groups to find the size of $J(K)/mJ(K)$.

\begin{prop}
\label{snake}
Let $B$ and $D$ be abelian groups and let $f:B\rightarrow
D$ and $g:D\rightarrow B$ be homomorphisms. The following is
an exact sequence
\[
0 \rightarrow B[f]/g(D[fg]) \rightarrow B/gD \stackrel{f}{\rightarrow}
D/fgD \rightarrow D/fB \rightarrow 0.\]
\end{prop}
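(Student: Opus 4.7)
My plan is to prove exactness directly at each of the four positions, which is equivalent to splicing two short exact sequences. First I would check that the three maps are well defined. The choice of denominator in the leftmost term is designed so that $g(D[fg]) = gD \cap B[f]$: if $d \in D$, then $g(d) \in B[f]$ if and only if $fg(d) = 0$, i.e., $d \in D[fg]$. Thus $b + g(D[fg]) \mapsto b + gD$ realizes $B[f]/g(D[fg]) = B[f]/(gD \cap B[f])$ as the subgroup $(B[f]+gD)/gD$ of $B/gD$ by the second isomorphism theorem, so the first map is automatically injective. The middle map $\bar f : B/gD \to D/fgD$ is well defined since $f(gD) = fgD$, and the map $\pi : D/fgD \to D/fB$ is well defined (since $fgD \subseteq fB$) and visibly surjective.

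For exactness at $B/gD$, an element $b + gD$ lies in $\ker \bar f$ iff $f(b) \in fgD$, i.e., $f(b) = fg(d')$ for some $d' \in D$, iff $b - g(d') \in B[f]$, iff $b \in B[f] + gD$; this coincides with the image of the first map. For exactness at $D/fgD$, an element $d + fgD$ lies in $\ker \pi$ iff $d \in fB$, i.e., $d = f(b)$ for some $b \in B$, iff $d + fgD$ is in the image of $\bar f$.

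There is no real obstacle here; the only mild subtlety is recognizing that the leftmost denominator $g(D[fg])$ equals $gD \cap B[f]$, which is exactly what makes the first map inject. Equivalently, one can package the argument as splicing the short exact sequence
\[
0 \to B[f]/g(D[fg]) \to B/gD \stackrel{\bar f}{\to} fB/fgD \to 0
\]
with the tautological
\[
0 \to fB/fgD \to D/fgD \to D/fB \to 0
\]
along the common term $fB/fgD$, which yields the stated four-term exact sequence.
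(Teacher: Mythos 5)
Your proof is correct and complete, but it takes a more elementary route than the paper. The paper deduces the proposition in one stroke from the snake lemma: it places the short exact sequences $0 \to gD \to B \to B/gD \to 0$ and $0 \to fgD \to D \to D/fgD \to 0$ as the middle rows of a commutative diagram with the vertical map $f$ connecting them, and reads off the kernel row $0 \to g(D[fg]) \to B[f] \to B[f]/g(D[fg]) \to 0$ and the cokernel row $0 \to D/fB \to D/fB \to 0$; the connecting sequence of the snake lemma is then exactly the stated four-term sequence. Your argument instead verifies exactness at each position by a direct element chase, with the one genuinely important observation being the identity $g(D[fg]) = gD \cap B[f]$, which is what makes the first map inject --- this is the same fact the paper encodes by taking $g(D[fg])$ as the kernel of $f$ restricted to $gD$. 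The trade-off is the usual one: your version is self-contained and transparent about where each hypothesis is used, while the paper's is shorter and makes the diagrammatic origin of the sequence visible at a glance. Your closing remark about splicing the two short exact sequences along $fB/fgD$ is a clean way to package the computation and is essentially the element-level content of the paper's diagram.
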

\begin{proof}
The proposition follows from the diagram below, which commutes from
the snake lemma applied to the middle two exact sequences.
\[
\begin{array}{ccccccccc}
0 & \rightarrow & g(D[fg]) & \rightarrow & B[f] & \rightarrow &
B[f]/g(D[fg]) & \rightarrow & \\
 & & \downarrow & & \downarrow & & \downarrow & & \\
0 & \rightarrow & gD & \rightarrow & B & \rightarrow & B/gD & \rightarrow
& 0 \\
 & & \;\;\;\downarrow f & & \;\;\;\downarrow f & & \;\;\;\downarrow f & & \\
0 & \rightarrow & fgD & \rightarrow & D & \rightarrow & D/fgD & \rightarrow
& 0 \\
 & & \downarrow & & \downarrow & & \downarrow & & \\
 & \rightarrow & 0 &
 \rightarrow & D/fB & \rightarrow & D/fB & \rightarrow & 0 \end{array}
\]
\end{proof}

If $f$ and $g$ are isogenies of abelian varieties, then the group
$B[f]/g(D[fg])$ will be a quotient of torsion groups, hence computeable.
Replacing $f$ and $g$ by $\phi$ and $\phi'$,
{}from $J(K)/\phi A(K)$ and $A(K)/\phi' J(K)$ we can compute
$J(K)/\tau J(K)$ using the exact sequence.
By replacing $f$ and $g$ by $\tau^{i}$, $1\leq i \leq t-1$, and $\tau$,
we can compute the size of
$J(K)/mJ(K)$. If $r$ is the Mordell-Weil rank of $J$ over $K$, then
$J(K)/mJ(K)\cong ({\bf Z}/m{\bf Z})^{r}\oplus J(K)[m]$.

On the other hand, $\phi$-Selmer groups have many interesting uses
beyond computing $J(K)/mJ(K)$. For example, in
\cite{We}, Wetherell provides a method of bounding the number of
rational points on a curve $C$ when the Mordell-Weil rank is at least
as large as the genus. This is the case that effective Chabauty (see
\cite{Co})
does not help with. In order to do this, he considers a set of covers
of the curve parametrized by elements of a $\phi$-Selmer group,
where $\phi$ is any isogeny from an abelian variety to the Jacobian of $C$.

\subsection{Assumptions}
\label{ass}

Let us consider the two assumptions that we made.

Let $d={\rm gcd}\, \{ \, [K^{\dagger}:K]\; |\; K\subseteq K^{\dagger}
\subset \overline{\bf Q}, \;
C(K^{\dagger})\neq \emptyset\} $. Recall that the exponent of
$A[\phi]$ is $q=p^{l}$.

\begin{prop}
\label{assi}
If $p\! \not | d$, then Assumption I is satisfied.
\end{prop}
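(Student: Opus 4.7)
\noindent{\bf Proof proposal.} The plan is to combine two exponent bounds on $J({\cal K})$ (for ${\cal K}=K$ or $K_\Gs$) and then use coprimality: I will show that $J({\cal K})/\phi A({\cal K})$ has exponent dividing $q=p^l$, while $J({\cal K})/{\rm Pic}^0(C)({\cal K})$ has exponent dividing $d$ (or its local analogue). Since $p\nmid d$, these exponents are coprime, and a B\'ezout manipulation will produce a ${\cal K}$-rational divisor representative for any class in $J({\cal K})/\phi A({\cal K})$.

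For the first bound, I use the coboundary embedding $\delta : J({\cal K})/\phi A({\cal K})\hookrightarrow H^1({\cal K},A[\phi])$ from Section~\ref{alg}. Because $A[\phi]$ has exponent $q$, so does $H^1({\cal K},A[\phi])$, and therefore $q\cdot J({\cal K})\subseteq \phi A({\cal K})$.

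For the second bound I would argue as follows. For each finite extension ${\cal K}^\dagger/{\cal K}$ of degree $n$ with $C({\cal K}^\dagger)\neq\emptyset$, the presence of a ${\cal K}^\dagger$-rational point on $C$ gives ${\rm Pic}^0(C)({\cal K}^\dagger)=J({\cal K}^\dagger)$ (a classical Abel--Jacobi / Lichtenbaum fact: the obstruction lies in ${\rm Br}({\cal K}^\dagger)$ and is killed by the index of $C/{\cal K}^\dagger$, which is $1$). Given $y\in J({\cal K})\subseteq J({\cal K}^\dagger)$, choose a ${\cal K}^\dagger$-rational degree-0 divisor $D$ with $[D]=y$ and form the trace $N(D)=\sum_\sigma \sigma D$ over the $n$ embeddings $\sigma : {\cal K}^\dagger\hookrightarrow \overline{{\cal K}}$ fixing ${\cal K}$. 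Then $N(D)$ is ${\cal K}$-rational, of degree $0$, and its class in $J({\cal K})$ equals $ny$. Hence $n\cdot J({\cal K})\subseteq {\rm Pic}^0(C)({\cal K})$ for each admissible $n$, so $d_{{\cal K}}\cdot J({\cal K})\subseteq {\rm Pic}^0(C)({\cal K})$, where $d_{{\cal K}}$ is the analogue of $d$ with $K$ replaced by ${\cal K}$. For ${\cal K}=K_\Gs$, completing any global extension $K^\dagger/K$ of degree $n$ with $C(K^\dagger)\neq\emptyset$ at a prime above $\Gs$ gives a local extension of degree dividing $n$ also carrying a $C$-point, so $d_\Gs\mid d$ and $p\nmid d$ forces $p\nmid d_{{\cal K}}$ in both cases.

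To conclude, write $1=a d_{{\cal K}}+bq$. For any $y\in J({\cal K})$, let $D_0\in {\rm Div}^0(C)({\cal K})$ represent $d_{{\cal K}} y$; since $qy\in \phi A({\cal K})$, we get $y\equiv a d_{{\cal K}} y=[aD_0]\pmod{\phi A({\cal K})}$, so $aD_0\in {\rm Div}^0(C)({\cal K})$ is the required ${\cal K}$-rational divisor representative. The main technical ingredient is the trace construction in the second step, specifically the justification that ${\rm Pic}^0(C)({\cal K}^\dagger)$ equals $J({\cal K}^\dagger)$ once $C({\cal K}^\dagger)\neq\emptyset$; once that is in hand the rest is arithmetic bookkeeping.
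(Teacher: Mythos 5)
Your proposal is correct in substance and follows essentially the same route as the paper: both arguments pass to an extension $K^{\dagger}$ with a rational point (where every class in $J(K^{\dagger})$ is represented by a $K^{\dagger}$-rational divisor), apply the norm/trace to conclude that $d^{\dagger}$ --- hence $d$ --- kills the cokernel of ${\rm Div}^{0}(C)({\cal K})\rightarrow J({\cal K})$, and then combine this with $qJ({\cal K})\subseteq\phi A({\cal K})$ via a B\'{e}zout identity. The paper packages the trace step as a commutative diagram of norm maps rather than the explicit sum $\sum_{\sigma}\sigma D$, but it is the same computation, and your explicit statement of the two exponent bounds is if anything cleaner.

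One local step is misjustified: the degree of a completion $K^{\dagger}_{\Gq}/K_{\Gs}$ at a prime $\Gq$ above $\Gs$ need \emph{not} divide $n=[K^{\dagger}:K]$ (e.g.\ in a cubic field in which $\Gs$ has primes of local degrees $1$ and $2$, the degree $2$ does not divide $3$). What is true is that the local degrees at the primes above $\Gs$ sum to $n$; since $d_{\Gs}$ divides each local degree (each completion inherits a point of $C$), it divides their sum $n$ for every admissible $n$, whence $d_{\Gs}\mid d$ and $p\!\not|\, d_{\Gs}$ as you need. (The paper argues equivalently: choosing $K^{\dagger}$ with $p\!\not|\, d^{\dagger}$, at least one local degree must be prime to $p$.) With that one-line repair your argument is complete.
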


\begin{proof}
Let $K^{\dagger}$
be a field of degree $d^{\dagger}$
over $K$ with $C(K^{\dagger})\neq \emptyset$.
{}From \cite[p.\ 168]{Mi1},
every element of $J(K^{\dagger})$ is represented by
a degree 0 divisor of $C$ defined over $K^{\dagger}$. Let $N$ denote any map
induced by the norm map from $K^{\dagger}$ to $K$.
The following is a commutative
diagram.

\[
\begin{array}{ccccc}
{\rm Div}^{0}(C)(K^{\dagger}) & \rightarrow &
J(K^{\dagger}) & \rightarrow & 0 \\
\downarrow N & & \downarrow N & & \;\;\;\downarrow N \\
{\rm Div}^{0}(C)(K) & \rightarrow & J(K) & \rightarrow &
{\rm coker}\end{array}\]
The composition of the natural inclusion of ${\rm Div}^{0}(C)(K)$ in
${\rm Div}^{0}(C)(K^{\dagger})$
with the norm map to ${\rm Div}^{0}(C)(K)$ is
the multiplication by $d^{\dagger}$ map. So $d^{\dagger}$ kills the cokernel.
The cokernel is thus killed by $d$, the greatest common divisor of the
$d^{\dagger}$'s.
Since $q$ is relatively prime to $d$,
we know that every element of $J(K)/\phi A(K)$
is represented by a divisor class that contains a divisor of $C$
defined over $K$.

Fix a prime $\Gs\in S$.
Since $p | \!\! / d$, there is a completion of $K^{\dagger}$
at a prime over $\Gs$ whose degree over $K_{\Gs}$ is prime to $p$.
The same argument as above shows that every element of $J(K_{\Gs})/
\phi A(K_{\Gs})$ is represented by a divisor class that contains a divisor
of $C$ defined over $K_{\Gs}$.
\end{proof}

Now let us consider Assumption II.
If the induced map $w$ on cohomology groups is an injection for
some given spanning set then we have an injection for any spanning
set containing the given one.
To test whether there exists a spanning set satisfying Assumption II, it
suffices to use the entire set $\Psi$. For any given spanning set,
the following provides a simplification.
Let $K'$ be the minimal field of definition of the $D_{i}'s$.
Let $L_{j}' = \prod \overline{K}_{i_{j}}$ where
$i_{j}$ ranges over all those $l$ such that $D_{l}$ is in the same
$\Gal (K'/K)$-orbit
as $D_{j}$. We let $L_{j}'$ inherit its $\Gal (K'/K)$-module
structure from $L'$.
We have
$L'\cong\prod_{j\in \Lambda}
L_{j}'$.
The group $H^{1}(\Gal (K'/K),\mu_{q}(L'))$ is isomorphic to
$\oplus_{j\in\Lambda} H^{1}(\Gal (K'/K),
\mu_{q}(L_{j}'))$.
Let ${\rm Stab}_{j}$ be the stabilizer in $\Gal (K'/K)$ of $D_{j}$.
The latter sum
is isomorphic to $\oplus_{j\in\Lambda} H^{1} ({\rm Stab}_{j},\mu_{q})$ from
Shapiro's lemma
(see \cite[p.\ 99]{AW}). We can make an analogous statement by replacing
$K$ with $K_{\Gs}$.

\subsection{Extension of the algorithm}
\label{ext}

Let $f$ be a polynomial of degree 2d over the number field $K$ with
distinct roots in $\overline{K}$.
Let $C$ be the normalization of the curve given by the affine equation
$y^{2}=f(x)$ and let $J$ be its Jacobian.
Let $\{ \alpha_{i} \} $ be the set of roots of $f$.
Then the divisor classes in $P_{L}=
([(\alpha_{1},0)-(\alpha_{1},0)],
[(\alpha_{2},0) - (\alpha_{1},0)], \ldots , [(\alpha_{2d},0) -
(\alpha_{1},0)])$ span $J[2]$.
We can
let $L=K[T]/(f(T))$, and $L'=\overline{K}[T]/
(f(T))$ where $\Gal (\overline{K}/K)$ acts trivially on $T$.
Note $\overline{K}[T]/(f(T))$ $\cong \prod \overline{K}[T]/(T-\alpha_{i})$
$\cong \prod \overline{K_{i}}$
by $T\mapsto (T,\ldots ,T) \mapsto (\alpha_{1},\ldots ,\alpha_{2d})$.
The map $w'$, given by $P\mapsto e_{2}(P,\hat{P}_{L})$,
sends $J[2]$ to $\mu_{2}(L')$. However this
map may not be defined over $K$. The group $G$
generated by the set
$\{ (\sigma - 1)(w'(P))\; | \; P\in J[2], \sigma \in \Gal (\overline{K}
/K) \} $ is contained in $\pm 1\subset \mu_{2}(L')$.
Thus the induced map from $J[2]$ to $\mu_{2}(L')/\pm 1$
is defined over $K$; it is also injective. The map this induces on cohomology
may not be injective, but the kernel is sufficiently under control so
that Mordell-Weil ranks can nevertheless sometimes be computed.
In this case, the map $F$ is $x-T$ and its image
is in $L^{\ast}/L^{\ast 2}K^{\ast}$.
This case is far more complicated than
those that fit into the framework given earlier.
It is discussed in
\cite{Ca}, \cite{FPS}, \cite{PS} and \cite{St1}.

We can try to use this technique in general. We can pick some
spanning set and quotient out by a group like $G$.
But then we can not typically expect the
induced map on cohomology to be close enough to injective to
be useful.

\section{Curves of the form $y^{p}=f(x)$}
\label{ytop}

Let $K$ be a field of characteristic 0. Let $f(x)$ be a monic
polynomial over $K$ of degree $d$ with distinct roots in $\overline{K}$.
Let $C$ be the normalization of
the projective curve defined by the affine equation $y^{p}=f(x)$,
where $p$ is a prime that does not divide $d$. (The case where $p$ does
divide $d$ is described in \cite{PS}).
{}From \cite[\S 1]{Tw}, the genus of $C$ is $g=(p-1)(d-1)/2$.
Since $p\neq d$ there is a single point on the line at infinity.
If $d\neq p\pm 1$, then the projective curve will
be singular at $\infty$.
Also from \cite[\S 1]{Tw}, since $p\!\,\not | d$, the normalization
has a single rational point over the point on the line at infinity
which we denote $\infty$.
Since we chose $f$ to have distinct roots, the projective curve given by
$y^{p}=f(x)$
can be singular nowhere else.

Consider the map $\tau$ on $C$, that on the affine part sends
$(x,y)\mapsto (x,\zeta_{p} y)$. The map $\tau$ induces an automorphism
of $J$.
The group $J$ is generated
by divisor classes of the form $[P-\infty]$ where $P$ is an affine
point on $C$. The divisor of the function $x-x(P)$ is
$\tau^{p-1}P + \ldots + \tau P + P - p\infty$.
Consider the subring ${\bf Z}[\tau]$ in
${\rm End}(J)$.
The minimal polynomial of
$\tau$ over ${\bf Z}$ is $t^{p-1}+ \ldots + t+1$. Thus $\tau$ acts
as a primitive $p$th root of unity in ${\rm End}(J)$; so by abuse of
notation we rename it $\zeta_{p}$. Let $\phi=1-\zeta_{p}$ in
End$(J)$.

For $1\leq i\leq p-2$, the quotient of the numbers $(1-\zeta_{p}^{i})$
and $(1-\zeta_{p})$ is a unit. Thus the subgroup $J[\phi]$ of
$J$ is
fixed by $\Gal (\overline{K}/K)$. The abelian variety $J/J[\phi]$,
however,
will typically not be a Jacobian over $K$ unless $K$ contains $\zeta_{p}$.
Thus it will be difficult to compute the Mordell-Weil rank
of $J(K)$ directly for the reasons presented at the end of Section~\ref{alg2}.
For that reason, we assume that $K$ contains $\zeta_{p}$ so
$\phi$ is a $K$-defined endomorphism.

Here is one case where the quotient is a Jacobian.
Let $C$ be $y^{3}=x^{2}-k$ and $C'$ be $y^{3}=x^{2}+27k$ with
$k\in K^{\ast}$ (and $K$ not necessarily containing $\zeta_p$)
and let $E$ and $E'$ be their Jacobians (elliptic curves).
Let $\phi = 1-\zeta_{3}$ on $E$ and $\phi' = 1-\zeta_{3}$ on $E'$.
Then there are isogenies $\tau : E\rightarrow E/E[\phi]\cong E'$
and $\tau' : E'\rightarrow E'/E'[\phi']\cong E$ defined over $K$ with
$\tau'\circ\tau =3$.
In \cite{Tp}, Top describes the computation of the $\tau$- and $\tau'$-Selmer
groups along the lines of Section~\ref{alg}.

Let us show that $\Psi=\lambda^{-1}\hat{J}[\hat{\phi}]=J[\phi]$.
\begin{prop}
\label{pp}
Let $\lambda$ be the canonical principal polarization from $J$ to $\hat{J}$
with respect to $C$.
We have $\lambda^{-1}\hat{J}[\hat{\phi}]=J[\phi]$.
\end{prop}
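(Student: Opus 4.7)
The plan is to recast the statement as an equality of kernels inside $J$ and then reduce it to a unit computation in ${\bf Z}[\zeta_{p}]\subset{\rm End}(J)$. I would introduce the Rosati dual $\phi^{\dagger}:=\lambda^{-1}\hat{\phi}\lambda\in{\rm End}(J)$; then by definition $\lambda(P)\in\hat{J}[\hat{\phi}]$ iff $\phi^{\dagger}(P)=0$, so
\[
\lambda^{-1}\hat{J}[\hat{\phi}] \;=\; J[\phi^{\dagger}].
\]
Hence it suffices to show that $\phi^{\dagger}$ and $\phi$ have the same kernel.

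The key input is to identify $\zeta_{p}^{\dagger}$. The endomorphism $\zeta_{p}\in{\rm End}(J)$ is the automorphism of $J$ induced by the order-$p$ automorphism $\tau\colon(x,y)\mapsto(x,\zeta_{p}y)$ of $C$. A standard property of the canonical principal polarization attached to a curve (see \cite[III.6]{Mi1}) is that the Rosati involution attached to $\lambda$ carries the endomorphism induced by $\tau$ to the one induced by $\tau^{-1}$: dually, pullback becomes pushforward, and for an automorphism pushforward equals pullback by the inverse. So $\zeta_{p}^{\dagger}=\zeta_{p}^{-1}$, whence $\phi^{\dagger}=1-\zeta_{p}^{-1}$.

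A short algebraic manipulation then finishes the argument. Inside ${\bf Z}[\zeta_{p}]\subset{\rm End}(J)$ one has
\[
\phi^{\dagger} \;=\; 1-\zeta_{p}^{-1} \;=\; -\zeta_{p}^{-1}(1-\zeta_{p}) \;=\; -\zeta_{p}^{-1}\phi,
\]
and $-\zeta_{p}^{-1}$ is a unit. Two endomorphisms that differ by a unit have the same kernel, so $J[\phi^{\dagger}]=J[\phi]$, and combining with the first display gives the desired equality.

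The only nontrivial ingredient is the identification $\zeta_{p}^{\dagger}=\zeta_{p}^{-1}$; the rest is formal. The one thing to watch out for is a convention slip in defining the Rosati involution or the sign of the canonical polarization, but the final statement is robust to this, since $\zeta_{p}$ and $\zeta_{p}^{-1}$ enter symmetrically in $\phi=1-\zeta_{p}$ up to the unit factor $-\zeta_{p}^{-1}$.
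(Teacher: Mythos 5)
Your proposal is correct and follows essentially the same route as the paper: both reduce the statement to computing the Rosati dual of $\phi=1-\zeta_{p}$, invoke $\zeta_{p}^{\dagger}=\zeta_{p}^{-1}$, and conclude because $1-\zeta_{p}^{-1}$ differs from $1-\zeta_{p}$ by the unit $-\zeta_{p}^{-1}$. The only difference is cosmetic: you write out the unit explicitly, while the paper simply notes that the quotient is a unit.
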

\begin{proof}
Let $(1-\zeta_{p})^{\dagger}$ denote the image in ${\rm End}(J)$ of
$1-\zeta_{p}$ under the Rosati involution. By definition,
the following diagram commutes.
\[
\begin{array}{rcl}
J & \stackrel{\lambda}{\rightarrow} & \hat{J} \\
(1-\zeta_{p})^{\dagger} \uparrow & & \uparrow \widehat{1-\zeta_{p}} \\
J & \stackrel{\lambda}{\rightarrow} & \hat{J} \end{array}\]
{}From \cite[p.\ 139]{Mi2}, we have $\zeta_{p}^{\dagger}=\zeta_{p}^{-1}$.
Thus we have\[
\Psi = \lambda^{-1}\hat{J}[\widehat{1-\zeta_{p}}]= J[(1-\zeta_{p})^{\dagger}]=
J[1-\zeta_{p}^{\dagger}]=J[1-\zeta_{p}^{-1}]=J[1-\zeta_{p}]=J[\phi]\]
since the quotient of $1-\zeta_{p}^{-1}$ and $1-\zeta_{p}$ is a unit.
\end{proof}

We know $(\phi)^{p-1}=u\cdot p$
where $u$ is a unit in ${\rm End}(J)$.

\vspace{1mm}
\noindent
{\bf Definition:}
Let ${\rm dim}\, M$ denote the dimension of an ${\bf F}_{p}$-vector
space $M$.

\noindent
We also know ${\rm dim}\, J[p]=2g=
(p-1)(d-1)$, so ${\rm dim}\, J[\phi]=d-1$.
We need to choose a
Galois-invariant
spanning set of $J[\phi]$.
Let $\{ \alpha_{i}\}$ be the set of roots of $f$.

\begin{prop}
\label{basis}
The divisor classes
$[(\alpha_{1},0)-\infty],\ldots ,[(\alpha_{d-1},0)-\infty]$ form
a basis for $J[\phi]$.
\end{prop}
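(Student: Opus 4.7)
The plan is to prove this in three stages: confirm the listed classes lie in $J[\phi]$, show $J[\phi]$ has $\mathbb{F}_p$-dimension exactly $d-1$, and then prove linear independence of the $d-1$ classes.

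For the first stage, I would observe that the automorphism $\tau$ of $C$ fixes each point $(\alpha_i, 0)$ (since $y=0$ there) and fixes $\infty$ (the unique point above $\infty\in\mathbb{P}^1$). Hence $\tau\cdot[(\alpha_i,0)-\infty]=[(\alpha_i,0)-\infty]$, which, since $\tau$ acts as $\zeta_p$ on $J$, means $\phi\cdot[(\alpha_i,0)-\infty]=(1-\zeta_p)[(\alpha_i,0)-\infty]=0$. For the second stage, from the identity $\phi^{p-1}=u\cdot p$ (with $u$ a unit in $\mathrm{End}(J)$) I get $\deg(\phi)^{p-1}=\deg(p)=p^{2g}$, so $\#J[\phi]=p^{2g/(p-1)}=p^{d-1}$ using $2g=(p-1)(d-1)$.

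The main obstacle, and the heart of the proof, is linear independence. I would argue it directly: suppose $\sum_{i=1}^{d-1}n_i[(\alpha_i,0)-\infty]=0$ in $J$ with $0\le n_i<p$, so there is an $h\in\overline{K}(C)^{\times}$ with $(h)=\sum_{i=1}^{d-1}n_i((\alpha_i,0)-\infty)$, and I wish to conclude all $n_i=0$ (extend by $n_d=0$ for symmetry). Since every $(\alpha_i,0)$ and $\infty$ is fixed by $\tau$, the divisor $(h)$ is $\tau$-invariant, so $\tau h = c\,h$ for some constant $c\in\overline{K}^{\times}$; iterating gives $c^p=1$, hence $c=\zeta_p^k$ for some $k$ with $0\le k<p$. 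Using the decomposition $\overline{K}(C)=\bigoplus_{j=0}^{p-1}\overline{K}(x)\,y^j$ into $\tau$-eigenspaces (with $\tau(y^j g(x))=\zeta_p^j y^j g(x)$), the eigenfunction $h$ must lie in the $\zeta_p^k$-eigenspace, so $h=g(x)\,y^k$ for some $g\in\overline{K}(x)^{\times}$.

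Finally I compute $(h)$ explicitly. Because $(h)$ is supported only at the $(\alpha_i,0)$ and at $\infty$, the rational function $g(x)$ can have zeros and poles on $\mathbb{P}^1$ only at the $\alpha_i$ or at $\infty$, so $g(x)=c\prod_{i=1}^{d}(x-\alpha_i)^{m_i}$ for some $m_i\in\mathbb{Z}$. Combining $(x-\alpha_i)=p(\alpha_i,0)-p\infty$ and $(y)=\sum_{i=1}^d(\alpha_i,0)-d\infty$ yields
\[
(h)=\sum_{i=1}^{d}(pm_i+k)(\alpha_i,0)\;-\;\bigl(p\textstyle\sum m_i+kd\bigr)\infty.
\]
Matching coefficients with $\sum_{i=1}^{d}n_i((\alpha_i,0)-\infty)$ (with $n_d=0$) gives $pm_i+k=n_i$ for every $i$. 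Taking $i=d$ forces $k=-pm_d\equiv 0\pmod p$, and since $0\le k<p$ we get $k=0$; then $n_i=pm_i$ with $0\le n_i<p$ forces all $n_i=0$. This establishes independence, and together with $\dim_{\mathbb{F}_p}J[\phi]=d-1$ it shows the $d-1$ classes form a basis.
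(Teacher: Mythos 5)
Your proof is correct, but it takes a genuinely different route from the paper's. The paper (following a suggestion of Stoll) proves the \emph{spanning} half cohomologically: it shows $H^{1}(\langle\tau\rangle,{\rm Princ})=0$ via Hilbert's Theorem 90 and the vanishing of $H^{2}(\langle\tau\rangle,\overline{K}^{\ast})$, concludes that ${\rm Div}^{0}(C)^{\langle\tau\rangle}\rightarrow J^{\langle\tau\rangle}=J[\phi]$ is surjective, identifies the $\tau$-invariant divisors as combinations of $(\alpha_{i},0)-\infty$ and principal norms, and then invokes ${\rm dim}\,J[\phi]=d-1$ together with the single relation $\sum[(\alpha_{i},0)-\infty]=0$. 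You instead prove the \emph{independence} half directly: a relation $\sum n_{i}[(\alpha_{i},0)-\infty]=0$ yields a function $h$ with $\tau$-invariant divisor, hence a $\tau$-eigenfunction, hence of the form $g(x)y^{k}$ by the Kummer decomposition $\overline{K}(C)=\oplus_{j}\overline{K}(x)y^{j}$; the support condition forces $g=c\prod(x-\alpha_{i})^{m_{i}}$, and matching coefficients against $(x-\alpha_{i})=p(\alpha_{i},0)-p\infty$ and $(y)=\sum(\alpha_{i},0)-d\infty$ kills all the $n_{i}$. Since both arguments are completed by the count ${\rm dim}\,J[\phi]=d-1$ (your degree computation $(\deg\phi)^{p-1}=\deg(up)=p^{2g}$ is a clean way to get this), either half suffices, and both proofs are complete. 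The paper's approach is the one that generalizes most readily (it is the standard descent argument for cyclic covers and explains \emph{why} the fixed points generate); yours is more elementary and self-contained, avoiding group cohomology entirely in favor of explicit divisor arithmetic, and as a bonus it exhibits the relation $\sum_{i=1}^{d}[(\alpha_{i},0)-\infty]=0$ as the \emph{only} relation rather than merely one relation among $d$ spanning classes.
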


\begin{proof}
The following proof was suggested by Michael Stoll.
Let $\overline{K}(C)$ be the function field of $C$ over $\overline{K}$
and let Princ denote the principal divisors.
The following sequences are both exact. \[
0\rightarrow \overline{K}^{\ast}\rightarrow \overline{K}(C)^{\ast}
\stackrel{\rm div}{\rightarrow} {\rm Princ}
\rightarrow 0\]
\[
0\rightarrow {\rm Princ} \rightarrow {\rm Div}^{0}(C)
\rightarrow
J\rightarrow 0\]
Let $\tau$, as before, be the automorphism of $C$ given on the
affine part by $\tau (x,y)=(x,\zeta_{p}y)$. By extending $\tau$ from
points to divisors, the map
$\tau$ induces
maps on Princ, ${\rm Div}^{0}(C)$ and $J$.
We can let $\tau$ act on $\overline{K}(C)$ by fixing $\overline{K}(x)$
and sending
$y$ to $\zeta_{p}^{-1}y$. Let $G=\langle \tau \rangle$.
Under these actions, both are exact sequences
of ${\bf Z}[G]$-modules.

Under $G$-cohomology we have the following exact sequence.
\[
0 = H^{1}(G,\overline{K}(C)^{\ast})\rightarrow H^{1}(G,{\rm Princ})
\rightarrow H^{2}(G,\overline{K}^{\ast})=\overline{K}^{\ast}/
\overline{K}^{\ast p} = 1\]
To get the first equality, we can identify $G$ with $\Gal (\overline{K}(C)/
\overline{K}(x))$ and use Hilbert's theorem 90. The next-to-last
equality comes from the fact that $G$ is a finite cyclic group and
so $H^{2}(G,\overline{K}^{\ast})\cong
{\rm ker}(1-\tau)/{\rm image}({\rm Norm})$, where ${\rm Norm}=1+\ldots
+ \tau^{p-1}$. Therefore
$H^{1}(G,{\rm Princ})=0$ and hence the map from ${\rm Div}^{0}(C)^{G}$ to
$J^{G}=J[\phi]$ is surjective. So $J[\phi]$ is generated by
$G$-invariant divisors. The group ${\rm Div}^{0}(C)^{G}$ is generated
by divisors of the form $P-\infty$ where $P\in C^{G}$ and by
those of the form
${\rm Norm}(P-\infty)$ for arbitrary $P\in C \setminus\infty$.
Each such ${\rm Norm}(P-\infty)
={\rm div}(x-x(P))$ and so is principal. Thus $J[\phi]$ is generated
by divisors of the form $P-\infty$ where $P\in C^{G}$ but the only
points fixed by $G$ are those with $y$-coordinate 0 and $\infty$.

We have already seen that ${\rm dim}J[\phi]=d-1$ and that the
sum of all $d$ divisor classes $[(\alpha_{i},0)-\infty]$ is 0, so
the result follows.
\end{proof}

Therefore $P_{L}=((\alpha_{1},0)-\infty,
\ldots , (\alpha_{d},0)-\infty)$ is a Galois-invariant set whose divisor
classes span
$J[\phi]$. Thus we can set
$L=K[T]/(f(T))$ and
\[
L'=\overline{K}[T]/(f(T))\cong
\prod_{i=1}^{d}
\overline{K}[T]/(T-\alpha_{i})
\cong
\prod_{i=1}^{d} \overline{K}_{i}
\; {\rm  by}\;
T\mapsto (T,\ldots ,T)\mapsto
(\alpha_{1},\ldots ,\alpha_{d})\]
where $\Gal (\overline{K}/K)$
acts trivially on $T$.
Therefore we can let $F = x-T$ where if $R=\sum n_i R_i$
is a good divisor,
then
\[ (x-T)(R) = \prod(x(R_i) - T)^{n_i}\in L^{\ast}.\]
When composed with the isomorphism of $L'$ and $\prod \overline{K_{i}}$,
the map $x-T$ becomes
the $d$-tuple of functions $(x-\alpha_{1},\ldots ,x-\alpha_{d})$, whose
divisors are $pP_{L}$.

It is often convenient to work with divisors of the form $D-r\infty$ where
$D$ is a $K$-rational, effective divisor of degree $r$. Let us consider
the image of the divisor class containing such a divisor under the map
$x-T$. The following proposition holds even when $K$ does not contain
$\zeta_{p}$.

\begin{prop}
\label{transl}
Any element of $J(K)$ can be represented by a divisor of degree 0 which is
defined over $K$ and whose support does not include $\infty$ or points
with $y$-coordinate 0.
In particular, let $D=\ssigmaone Q+\ldots +
\ssigmar Q-r\infty$ where the $\ssigmai\, Q$ are the $r$ conjugates
over $K$ of the point $Q$ of $C$ and $y(Q)\neq 0$. We have
\[
(x-T)([D])\equiv \prod_{i=1}^{r}(x(\ssigmai Q)-T)({\rm mod}\, L^{\ast p}).
\]
Let $D=(\alpha_{1},0)+\ldots + (\alpha_{r},0)-r\infty$
where the $\alpha_{i}$ are conjugate over $K$, possibly renumbered, and $r<d$.
We have
\[(x-T)([D])\equiv
\prod_{i=r+1}^{d}(\alpha_{i}-T)^{-1}+\prod_{i=1}^{r}(\alpha_{i}-T)
\;\;\; ({\rm mod}\, L^{\ast p}).\]
\end{prop}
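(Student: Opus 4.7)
The first statement is a direct consequence of Lang's Lemma~3, already invoked in the proof of Lemma~\ref{well-def}: for any class in $J(K)$, Assumption~I (justified here by the $K$-rational point $\infty$ via Proposition~\ref{assi}) furnishes a degree-$0$ representative defined over $K$, and Lang's lemma then allows one to translate it by a $K$-defined principal divisor away from the finite avoidance set $\{\infty, (\alpha_1, 0), \ldots, (\alpha_d, 0)\}$.

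The plan for the first explicit formula is to choose, by the first part, a good divisor $D'$ linearly equivalent to $D = \sum_{i=1}^r \ssigmai Q - r\infty$, write $D' = D + (h)$ with $h \in K(C)^{\ast}$, and evaluate each component $f_j(D')$ of $F(D')$ via Weil reciprocity applied to the pair $(f_j, h)$, where $f_j = x - \alpha_j$. Because $(f_j) = p(\alpha_j, 0) - p\infty$, because $D'$ is good, and because $y(Q) \neq 0$ keeps each $\ssigmai Q$ off the $(\alpha_k, 0)$, the supports of $(f_j)$ and $(h)$ intersect only at $\infty$. The tame symbols at non-shared points work out to $h((\alpha_j, 0))^{-p}$ at $(\alpha_j, 0)$ (manifestly a $p$-th power), to $f_j(R_k)^{a_k}$ at each support point $R_k$ of $D'$ (with product equal to $f_j(D')$), and to $f_j(\ssigmai Q)^{-1}$ at each $\ssigmai Q$. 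The remaining tame symbol at $\infty$ reduces, after choosing a local parameter of the form $x^{-a}y^{-b}$ with $ap + bd = 1$ (available since $\gcd(p,d)=1$) and using the monicity of $f$, to the sign $(-1)^{pr}$; this is itself a $p$-th power in $L_j^{\ast}$ (trivially if $p=2$, and via $-1=(-1)^p$ if $p$ is odd). Weil reciprocity $\prod_P(f_j,h)_P = 1$ then gives $f_j(D') \equiv \prod_i (x(\ssigmai Q) - \alpha_j) \pmod{L_j^{\ast p}}$, which is the $j$-th component of the claimed identity.

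For the second formula the same Weil-reciprocity argument applies to $D = \sum_{i=1}^r (\alpha_i, 0) - r\infty$, but now $(f_j)$ and $(h)$ additionally share support at $(\alpha_j, 0)$ whenever $j \leq r$, producing one extra tame-symbol contribution. The net effect encodes the identity $\prod_{i=1}^d (\alpha_i - T) \equiv 0$ in $L$: in each field component $L_j$ exactly one of the two displayed products is nonzero, and the two expressions in the stated formula together specify the class of $(x-T)([D])$ in $L^{\ast}/L^{\ast p}$ componentwise across the Galois orbits of the $\alpha_i$'s (with the ``$+$'' interpreted as this componentwise concatenation). The main obstacle throughout is the tame-symbol bookkeeping at points of the avoidance set---$\infty$ in both formulas and $(\alpha_j, 0)$ in the second---requiring a careful choice of local parameter and using both $\gcd(p,d)=1$ and the monicity of $f$ to guarantee that the leading-coefficient contributions lie in $L_j^{\ast p}$.
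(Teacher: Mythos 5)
Your argument is correct, but it is not the route the paper takes. The paper never invokes the tame-symbol (shared-support) form of Weil reciprocity here; instead it translates $D$ off the avoidance set by an \emph{explicit} principal divisor and then evaluates $x-T$ directly on the resulting good divisor. For the first formula it uses the function $(x-a)^{mr}(y-b)^{-nr-mr}$ with $nd+m(d-p)=1$, whose divisor cancels the $r\infty$ and introduces only points on the lines $x=a$ and $y=b$; the contribution of the $d$ points on $y=b$ is $\bigl((-1)^{d}(f(T)-b^{p})\bigr)^{r(n+m)}\equiv$ a $p$th power since $f(T)=0$ in $L$. For the second formula it subtracts the divisor of $g=y-\prod_{i=1}^{r}(x-\alpha_{i})$, reducing to (the negative of) a divisor of the first type supported on the roots of $\prod_{i=r+1}^{d}(x-\alpha_{i})-\prod_{i=1}^{r}(x-\alpha_{i})^{p-1}$, which is where the two-term expression comes from. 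Your version keeps $D$ in place and absorbs the bad points into local symbols at $\infty$ and, in the second case, at $(\alpha_{j},0)$. Both proofs consume the hypotheses at the analogous spot: the paper's B\'ezout relation $nd+m(d-p)=1$ plays the role of your local parameter $x^{-a}y^{-b}$ with $ap+bd=1$, and monicity of $f$ enters in the same way. What the paper's approach buys is explicitness (an actual good representative, useful for the computations later in the paper) and the avoidance of any reciprocity law beyond what was already established in Lemma~\ref{well-def}; what yours buys is conceptual economy, at the cost of the local bookkeeping. One step of yours is compressed to the point of being only an assertion: for $j\le r$ the extra symbol at $(\alpha_{j},0)$ must be evaluated using the local parameter $y$ there, giving $\bigl((x-\alpha_{j})/y^{p}\bigr)(\alpha_{j},0)=\prod_{i\ne j}(\alpha_{j}-\alpha_{i})^{-1}$ via $y^{p}=f(x)$, and it is this value (not merely ``the identity $\prod_{i}(\alpha_{i}-T)\equiv 0$'') that converts $\prod_{i\le r,\,i\ne j}(\alpha_{i}-\alpha_{j})$ into $\prod_{i>r}(\alpha_{i}-\alpha_{j})^{-1}$; you should also note, as the paper does, that the residual signs are only automatically $p$th powers for $p$ odd, the case $p=2$ being handled by citation.
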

\begin{proof}
Assume $p$ is odd.
Since $C$ has a $K$-rational point, namely $\infty$, the first sentence
follows from Proposition~\ref{assi}.
Let $Q=(x_{0},y_{0})$ be a point of $C$ defined over a finite extension
of $K$
and fix a set $\{ \sigma_{i} \} $ of embeddings of $K(Q)$ in $\overline{K}$,
such that
$\ssigmaone Q,\ldots ,\ssigmar Q$ are the conjugates of $Q$
over $K$. Let
\[
D=(\sum_{i=1}^{r} \ssigmai Q)-r\infty.\]
Any degree 0 divisor defined over $K$ can be written as the sum and
difference of such divisors (possibly with different sets of
$\{ \sigma_{i}\} $).

First we assume that $y_{0}\neq 0$.
Let $a,b\in K^{\ast}$ with $f(a)\neq 0$.
Let $(a,c)$, $(a,\zeta_{p} c)$, \ldots
$,(a,\zeta_{p}^{p-1} c)$ be the $p$ affine points on the line $x=a$ and let
$(g_{1},b),\ldots ,(g_{d},b)$ be the $d$ affine points on the line $y=b$; the
latter $d$ points are not necessarily distinct.
Since $p$ does not divide $d$ we
can find integers $n$ and $m$ such that $nd+m(d-p)=1$.
The divisor of the function $(x-a)^{mr}(y-b)^{-nr-mr}$ is
\[ r\infty +
rm(a,c) + \ldots + rm(a,\zeta_{p}^{p-1}c) -
r(n+m)(g_{1},b) -\ldots - r(n+m)(g_{d},b).\]

When we add $D$ to this principal divisor we get a divisor without
$\infty$ or points with $y$-coordinate 0
in its support. Therefore we have\[ (x-T)([D])=
\frac{(a-T)^{prm}(x(\ssigmaone Q)-T)\cdot\ldots\cdot (x(\ssigmar Q)-T)}
{((g_{1}-T)\cdot\ldots\cdot (g_{d}-T))^{r(n+m)}}=\]\[
\frac{(a-T)^{prm}(x(\ssigmaone Q)-T)\cdot\ldots\cdot (x(\ssigmar Q)-T)}
{((-1)^{d}(f(T)-b^{p}))^{r(n+m)}}\equiv
(x(\ssigmaone Q)-T)\cdot\ldots\cdot (x(\ssigmar Q)-T) ({\rm mod}\,
L^{\ast p}).\]

Let $D=(\alpha_{1},0)+\ldots + (\alpha_{r},0)-r\infty$
where the $\alpha_{i}$ are conjugate over $K$ and $r<d$.
We can let $r< d$ since $\sum (\alpha_{i},0)-d\infty$ is principal.
Let $g=y-\prod_{i=1}^{r}(x-\alpha_{i})$. We have \[
(g)=(\alpha_{1},0)+\ldots +(\alpha_{r},0)+\sum_{j=1}^{m'}P_{j} -
(m'+r)\infty\]
where $m'={\rm max}\{ d-r,r(p-1) \}$ and
$P_{j}=(x_{j},y_{j})$ with $y_{j}\neq 0$ and the $x_{j}$'s are the
roots of the polynomial\[
\prod_{i=r+1}^{d}(x-\alpha_{i})-\prod_{i=1}^{r}(x-\alpha_{i})^{p-1}.\]
We have\[
D-(g)=m'\infty-\sum_{i=1}^{m'}P_{i}\] which is the negative of a divisor
of the form we handled in the first part of the theorem.
Therefore we have
\[
(x-T)([D])\equiv (\prod_{i=1}^{m'}(x_{i}-T))^{-1}\equiv
(\prod_{i=1}^{m'}(T-x_{i}))^{-1}
\]\[ \equiv
(\prod_{i=r+1}^{d}(T-\alpha_{i})-\prod_{i=1}^{r}(T-\alpha_{i})^{p-1})^{-1}
({\rm mod}\, L^{\ast p}).\]

Let us consider $L$ to be a product of number fields or to be contained
in $\prod \overline{K_{i}}$. In either case, one of the two products
in the above formula will
be 0 at each factor. Thus we have \[
(x-T)([D])\equiv
\prod_{i=r+1}^{d}(\alpha_{i}-T)^{-1}+\prod_{i=1}^{r}(\alpha_{i}-T)
\;\;\; ({\rm mod}\, L^{\ast p}).\]

For the $p=2$ case, see \cite[Lemma 2.2]{Sc1}.
\end{proof}
The upshot of the first formula in the above lemma
is that you can basically ignore the appearance of
$\infty$ in such a divisor.

The following proposition shows that the map $w$ induces on cohomology
is injective and describes $H^{1}(K,J[\phi];S)$.

\begin{prop}
\label{split}
Let $K$ be a number field containing $\zeta_{p}$.
The groups $H^{1}(K,J[\phi];S)$ and \linebreak
${\rm ker}:\;
L(S,p)\stackrel{\rm norm}{\rightarrow}
K^{\ast}/K^{\ast p}$ are isomorphic via $k\circ w$.
\end{prop}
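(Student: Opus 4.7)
The plan is to realize $J[\phi]$ as the kernel of a natural product map $\mu_{p}(L') \rightarrow \mu_{p}$; once this is done, passing to Galois cohomology and invoking Kummer theory reduces the statement to the general framework already set up in Section~\ref{alg2}.

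First I would establish the short exact sequence of $\Gal (\overline{K}/K)$-modules
$$0 \rightarrow J[\phi] \stackrel{w}{\rightarrow} \mu_{p}(L') \stackrel{\Pi}{\rightarrow} \mu_{p} \rightarrow 0,$$
where $\Pi : \mu_{p}^{d} \rightarrow \mu_{p}$ is the product of components. Equivariance of $\Pi$ is immediate, since the twisted $\Gal (\overline{K}/K)$-action on $\mu_{p}(L')$ only permutes entries and acts on them. Surjectivity is clear, and injectivity of $w$ is Proposition~\ref{weil}. For exactness in the middle, bilinearity of the Weil pairing together with $\sum_{i=1}^{d} D_{i} = {\rm div}(y)$ (principal, as $D_{i} = (\alpha_{i},0) - \infty$) gives
$$\Pi(w(P)) = \prod_{i=1}^{d} e_{\phi}(P, \lambda[D_{i}]) = e_{\phi}(P, 0) = 1,$$
so $w(J[\phi]) \subseteq \ker \Pi$; equality then follows by comparing ${\bf F}_{p}$-dimensions ($d-1$ on both sides, using Proposition~\ref{basis}).

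Next I would take $\Gal (\overline{K}/K)$-cohomology. Since $\mu_{p} \subseteq K$, the invariants $\mu_{p}(L')(K)$ contain a factor $\mu_{p}$ for each $j \in \Lambda$, and projection onto any one factor surjects onto $\mu_{p}(K) = \mu_{p}$. The long exact sequence thus truncates to
$$0 \rightarrow H^{1}(K, J[\phi]) \stackrel{w}{\rightarrow} H^{1}(K, \mu_{p}(L')) \stackrel{\Pi}{\rightarrow} H^{1}(K, \mu_{p}),$$
which in particular verifies Assumption II in this setting. Kummer theory (via the map $k$) identifies $H^{1}(K, \mu_{p}(L')) \cong L^{\ast}/L^{\ast p}$ and $H^{1}(K, \mu_{p}) \cong K^{\ast}/K^{\ast p}$.

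The main step, and the one I expect to be the chief technical obstacle, is to identify the induced map on $H^{1}$ with the norm. On Kummer cocycle representatives $\sigma \mapsto \sigma(\sqrt[p]{l})/\sqrt[p]{l}$, taking the product of components produces the Kummer cocycle attached to $\prod_{i} l_{i} = {\rm Nm}_{L/K}(l)$. Alternatively, using $\mu_{p}(L') \cong \bigoplus_{j\in\Lambda} {\rm Ind}_{{\rm Stab}_{j}}^{\Gal (\overline{K}/K)} \mu_{p}$ and the Shapiro-lemma description in Section~\ref{ass}, $\Pi$ on $H^{1}$ becomes the sum of the individual norms ${\rm Nm}_{L_{j}/K}$; this avoids tedious index-juggling between $L$, $L'$, and its components indexed by embeddings, and is the cleanest route. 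Once $\Pi_{\ast} = {\rm Nm}_{L/K}$ is identified, imposing the ``unramified outside $S$'' condition via the general framework of Section~\ref{alg2} yields $H^{1}(K, J[\phi]; S) \cong \ker({\rm Nm}: L(S, p) \rightarrow K^{\ast}/K^{\ast p})$, with the isomorphism realized by $k \circ w$ by construction.
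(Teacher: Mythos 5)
Your proposal follows the paper's proof almost exactly: the same short exact sequence $0\rightarrow J[\phi]\stackrel{w}{\rightarrow}\mu_{p}(L')\rightarrow\mu_{p}\rightarrow 0$ (your $\Pi$ is the paper's norm map $N$), the same use of ${\rm div}(y)=\sum(\alpha_{i},0)-d\infty$ together with a dimension count to get exactness in the middle, and the same passage through Kummer theory to identify the induced map on $H^{1}$ with the norm from $L^{\ast}/L^{\ast p}$ to $K^{\ast}/K^{\ast p}$.

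The one step whose justification does not hold up as written is the surjectivity of $\Pi$ on $K$-invariants. The map $\Pi$ is the product over all $d$ coordinates, not a projection onto a single factor: an invariant element equal to $\zeta$ on the Galois orbit of $D_{j}$ (of size $n_{j}$) and $1$ elsewhere maps under $\Pi$ to $\zeta^{n_{j}}$, which is trivial when $p$ divides $n_{j}$. So ``projection onto any one factor surjects'' does not establish what you need. The correct argument --- and the only place the standing hypothesis that $p$ does not divide $d$ enters the proof --- is that the full diagonal $\zeta\mapsto(\zeta,\ldots,\zeta)$ is invariant and maps to $\zeta^{d}$, which generates $\mu_{p}$. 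The paper packages exactly this observation as an explicit Galois-equivariant section $\Delta^{b}$ of the norm, with $\Delta$ the diagonal embedding and $b$ a positive residue of $d^{-1}\ ({\rm mod}\ p)$; this splits the sequence of modules and hence the sequence of $H^{1}$'s, giving the left exactness at $H^{1}(K,J[\phi])$ (your verification of Assumption II) without any $H^{0}$ computation. With that one repair your argument is complete and coincides with the paper's.
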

\begin{proof}
First we show that the following is
a split exact sequence of $\Gal (\overline{K}/K)$-modules
\[
0\rightarrow J[\phi]\stackrel{w}{\rightarrow} \mu_{p}(L')
\stackrel{N}{\rightarrow} \mu_{p}(\overline{K})\rightarrow 0\]
where $N$ is the norm map.
The dimensions
of the three ${\bf F}_{p}$-vector spaces are $d-1$, $d$ and 1 respectively.
The divisor of the function $y$ is $(\alpha_{1},0)+\ldots + (\alpha_{d},0)
-d\infty$.
So the sum of the $d$ divisor classes $[(\alpha_{i},0)-\infty]$ is
trivial. Then since $\zeta_{p}\in K$, the Weil pairing is linear and
the image of $w$ is therefore equal to the kernel of the norm.
Let $\Delta$ be the diagonal embedding of $\mu_{p}(\overline{K})$
in $\mu_{p}(L')$. Let $b$ be a positive residue of
$d^{-1}({\rm mod}\; p)$. Then the composition of $\Delta^{b}$ and
the norm is the identity, so the exact sequence splits.

Since this short exact sequence splits, the following is a split
exact sequence
\[
0\rightarrow H^{1}(K,J[\phi])\stackrel{w}{\rightarrow}
H^{1}(K,\mu_{p}(L'))
\stackrel{N}{\rightarrow} H^{1}(K,\mu_{p}(\overline{K}))\rightarrow 0.\]
The group $H^{1}(K,\mu_{p}(L'))$
is isomorphic to $L^{\ast}/L^{\ast p}$ by the map we call $k$.
The group $H^{1}(K,\mu_{p}(\overline{K}))$ is isomorphic to $K^{\ast}/
K^{\ast p}$ by a Kummer map also.
So $k\circ w$ induces an isomorphism of
$H^{1}(K,J[\phi])$ with the kernel of the norm from
$L^{\ast}/L^{\ast p}$ to $K^{\ast}/K^{\ast p}$ and of
$H^{1}(K,J[\phi];S)$ with the kernel of the norm from
$L(S,p)$ to $K^{\ast}/K^{\ast p}$.
\end{proof}

The following proposition has two corollaries for a number field $K$.
The first gives the size of $J(K_{\Gs})/\phi J(K_{\Gs})$
for $\Gs$ a finite prime. The second
shows how to
find the Mordell-Weil rank of $J(K)$ from $J(K)/\phi J(K)$ and knowledge
of torsion.

\begin{prop}
\label{psize}
Let ${\cal K}$ be a number field or the completion of a number field at
a finite prime, that contains $\zeta_{p}$.
Then ${\rm dim} J({\cal K})/pJ({\cal K}) - {\rm dim} J({\cal K})[p]$
is the same as
$(p-1)({\rm dim} J({\cal K})/\phi J({\cal K}) - {\rm dim} J({\cal K})[\phi])$.
\end{prop}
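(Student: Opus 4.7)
The key input is the identity $\phi^{p-1} = u \cdot p$ with $u$ a unit in $\mathrm{End}(J)$, which I already have from the $\zeta_p$-action on $J$. Writing $M := J(\mathcal{K})$, this means $\phi^{p-1}M = pM$ and $M[\phi^{p-1}] = M[p]$, while $M/\phi M$ and $M[\phi]$ sit at the other end of a filtration $M \supseteq \phi M \supseteq \phi^2 M \supseteq \cdots \supseteq \phi^{p-1}M = pM$. The plan is to interpolate between these two extremes by proving a linear recurrence for the quantity
\[
\delta_i \;:=\; \dim_{{\bf F}_p}\! M/\phi^i M \;-\; \dim_{{\bf F}_p}\! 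M[\phi^i], \qquad 1 \leq i \leq p-1,
\]
showing that $\delta_{i+1} - \delta_i = \delta_1$, so that $\delta_{p-1} = (p-1)\delta_1$, which is exactly the claim. Note that each quotient and kernel in sight is killed by $p$ (since $p \in \phi^{p-1}\mathrm{End}(J) \subseteq \phi^i\mathrm{End}(J)$ for $1 \leq i \leq p-1$), so working with ${\bf F}_p$-dimensions is legitimate.

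The workhorse is Proposition~\ref{snake}, applied with $B = D = M$, $f = \phi^i$, and $g = \phi$, which yields the exact sequence
\[
0 \to M[\phi^i]\big/\phi\bigl(M[\phi^{i+1}]\bigr) \to M/\phi M \stackrel{\phi^i}{\to} M/\phi^{i+1}M \to M/\phi^iM \to 0.
\]
The map $\phi : M[\phi^{i+1}] \to M[\phi^i]$ has kernel $M[\phi]$, so its image has ${\bf F}_p$-dimension $\dim M[\phi^{i+1}] - \dim M[\phi]$, and hence the leftmost term has dimension $\dim M[\phi^i] - \dim M[\phi^{i+1}] + \dim M[\phi]$. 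Taking the alternating sum of dimensions in the four-term exact sequence (which vanishes provided the dimensions are finite) gives exactly the recursion $\delta_{i+1} - \delta_i = \delta_1$, and iterating from $i=1$ to $i=p-2$ gives $\delta_{p-1} = (p-1)\delta_1$.

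The one issue requiring care is the finiteness of the intermediate dimensions. In the global case, $M$ is finitely generated by Mordell--Weil, so all the groups involved are finite. In the local case, Proposition~\ref{size} gives finiteness of $M/pM$ and $M[p]$; since $pM \subseteq \phi^i M$ and $M[\phi^i] \subseteq M[p]$ for $1 \leq i \leq p-1$, each $M/\phi^iM$ and $M[\phi^i]$ is finite as well. I expect this to be the only even mildly delicate step; everything else is bookkeeping around the snake lemma.
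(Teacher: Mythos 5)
Your proposal is correct and is essentially the paper's own proof: both apply Proposition~\ref{snake} with $B=D=J({\cal K})$, $f=\phi^i$, $g=\phi$, use the fact that $\phi$ on $J({\cal K})[\phi^{i+1}]$ has kernel $J({\cal K})[\phi]$ to compute the dimension of the leftmost term, and reduce to $\phi^{p-1}=u\cdot p$; your recurrence $\delta_{i+1}-\delta_i=\delta_1$ is just a repackaging of the paper's telescoping sum. Your explicit attention to finiteness of the intermediate dimensions is a detail the paper leaves implicit, but it changes nothing in substance.
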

\begin{proof}
For each $n\geq 1$, the following is an exact sequence from
Proposition~\ref{snake} where $B=D=J({\cal K})$ and $g=\phi$ and
$f=\phi^{n}$.
\[
0\rightarrow \frac{J({\cal K})[\phi^{n}]}{\phi
(J({\cal K})[\phi^{n+1}])}\rightarrow
\frac{J({\cal K})}{\phi J({\cal K})}\stackrel{\phi^{n}}{\rightarrow}
\frac{J({\cal K})}{\phi^{n+1}J({\cal K})}\rightarrow
\frac{J({\cal K})}{\phi^{n}J({\cal K})}
\rightarrow 0\]
Therefore \[
{\rm dim}J({\cal K})/pJ({\cal K}) =
{\rm dim}J({\cal K})/\phi^{p-1}J({\cal K}) \]
\[
= (p-1){\rm dim}J({\cal K})/\phi J({\cal K}) - \sum_{i=1}^{p-2}
{\rm dim}J({\cal K})[\phi^{i}] + \sum_{i=1}^{p-2}{\rm dim}\phi
(J({\cal K})[\phi^{i+1}]).\]
Thus
\[
{\rm dim}J({\cal K})/pJ({\cal K}) - {\rm dim}J({\cal K})[p] =
(p-1){\rm dim}J({\cal K})/\phi J({\cal K}) - \sum_{i=1}^{p-1}
{\rm dim}J({\cal K})[\phi^{i}] + \sum_{j=1}^{p-1}{\rm dim}\phi
(J({\cal K})[\phi^{j}]).\]
For each $j\geq 1$, the following is an exact sequence.
\[
0\rightarrow J({\cal K})[\phi] \rightarrow
J({\cal K})[\phi^{j}]\stackrel{\phi}
{\rightarrow}\phi J({\cal K})[\phi^{j}]\rightarrow 0\]
Thus\[
{\rm dim}J({\cal K})/pJ({\cal K}) - {\rm dim}J({\cal K})[p] =
(p-1){\rm dim}J({\cal K})/\phi J({\cal K}) -
(p-1){\rm dim}J({\cal K})[\phi].\]
\end{proof}

\begin{cor}
\label{localsize}
Let $K_{\Gs}$ be a finite extension of ${\bf Q}_{s}$ containing
$\zeta_{p}$ and let
$r={\rm ord}_{p}(s)$. In addition let $g$ be the
genus of $C$. Then
${\rm dim}J(K_{\Gs})/\phi J(K_{\Gs}) = gr[K_{\Gs}:{\bf Q}_{s}(\zeta_{p})] +
{\rm dim}J(K_{\Gs})[\phi]$.
\end{cor}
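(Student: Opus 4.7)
The plan is to derive the corollary by combining Propositions~\ref{size} and~\ref{psize}: the former computes the size of $J(K_\Gs)/pJ(K_\Gs)$, and the latter converts between multiplication by $p$ and multiplication by $\phi$. Since $\phi^{p-1}$ equals $p$ up to a unit in ${\rm End}(J)$, the group $J[\phi]$ has exponent exactly $p$, so Proposition~\ref{size} applies with $q=p$.

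First I would apply Proposition~\ref{size} with $q=p$ and $\ell=s$, so that its $r$ equals ${\rm ord}_s(p)$. Because $p$ and $s$ are both primes, ${\rm ord}_s(p)={\rm ord}_p(s)$ (each is $1$ when $s=p$ and $0$ otherwise), so this agrees with the $r$ in the statement. Taking $\mathbf{F}_p$-dimensions yields
\[
\dim J(K_\Gs)/pJ(K_\Gs) - \dim J(K_\Gs)[p] = g r [K_\Gs : \mathbf{Q}_p].
\]
Next, Proposition~\ref{psize} (with ${\cal K}=K_\Gs$) rewrites the left-hand side as $(p-1)\bigl(\dim J(K_\Gs)/\phi J(K_\Gs) - \dim J(K_\Gs)[\phi]\bigr)$. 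Dividing by $p-1$ gives
\[
\dim J(K_\Gs)/\phi J(K_\Gs) - \dim J(K_\Gs)[\phi] = \frac{g r [K_\Gs : \mathbf{Q}_p]}{p-1}.
\]

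To finish, I would absorb the $p-1$ into the index. The right-hand side is nonzero only when $r\neq 0$, i.e.\ when $s=p$. In that case $K_\Gs$ is a finite extension of $\mathbf{Q}_p$ containing $\zeta_p$, hence contains $\mathbf{Q}_p(\zeta_p)$. Since $[\mathbf{Q}_p(\zeta_p):\mathbf{Q}_p]=p-1$, the tower law gives $[K_\Gs:\mathbf{Q}_p]/(p-1)=[K_\Gs:\mathbf{Q}_p(\zeta_p)]=[K_\Gs:\mathbf{Q}_s(\zeta_p)]$. When $r=0$ the term vanishes regardless, so in either case the identity becomes
\[
\dim J(K_\Gs)/\phi J(K_\Gs) = g r [K_\Gs : \mathbf{Q}_s(\zeta_p)] + \dim J(K_\Gs)[\phi],
\]
as claimed.

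The whole argument is bookkeeping layered on the two cited propositions, so there is no real obstacle. The one subtlety to watch is reconciling the two occurrences of ``$r$''---the ${\rm ord}_p(s)$ in the statement versus the ${\rm ord}_s(p)$ produced by Proposition~\ref{size}---and noting that the factor of $p-1$ coming from Proposition~\ref{psize} is precisely what is needed to pass from $\mathbf{Q}_p$ to $\mathbf{Q}_p(\zeta_p)$ in the index.
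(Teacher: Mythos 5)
Your proposal is correct and follows essentially the same route as the paper: apply Proposition~\ref{size} with $q=p$, convert via Proposition~\ref{psize}, and divide the factor $p-1$ into $[K_{\Gs}:{\bf Q}_{p}]$ using $[{\bf Q}_{p}(\zeta_{p}):{\bf Q}_{p}]=p-1$. The reconciliation of ${\rm ord}_{p}(s)$ with ${\rm ord}_{s}(p)$ is a nice explicit touch, but the argument is the same.
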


\begin{proof}
If $s\neq p$, then $r=0$ and this follows from Proposition~\ref{size}.
Let $\Gs$ lie over $p$; so $r=1$. From Proposition~\ref{size}, we have
\[
{\rm dim}J(K_{\Gs})/pJ(K_{\Gs})=
g[K_{\Gs}:{\bf Q}_{p}] + {\rm dim} J(K_{\Gs})[p].\]
Using Proposition~\ref{psize} we have
\[
g[K_{\Gs}:{\bf Q}_{p}]=
(p-1)({\rm dim} J(K_{\Gs})/\phi J(K_{\Gs}) - {\rm dim} J(K_{\Gs})[\phi])\]
\[ g[K_{\Gs}:{\bf Q}_{p}(\zeta_{p})]=
{\rm dim}J(K_{\Gs})/\phi J(K_{\Gs}) - {\rm dim}J(K_{\Gs})[\phi].\]
\end{proof}

\begin{cor}
\label{pMWrk}
Let $K$ be a number field containing $\zeta_{p}$. The Mordell-Weil rank
of $J(K)$ is \linebreak
$(p-1)({\rm dim} J(K)/\phi J(K) - {\rm dim} J(K)[\phi])$.
\end{cor}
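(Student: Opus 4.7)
The plan is to combine Proposition~\ref{psize} (specialized to $\mathcal{K}=K$) with the structure theorem for finitely generated abelian groups, which applies because $J(K)$ is a Mordell--Weil group.

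First I would recall that by Proposition~\ref{psize}, taking $\mathcal{K}=K$, we have the identity
\[
\dim J(K)/pJ(K) - \dim J(K)[p] = (p-1)\bigl(\dim J(K)/\phi J(K) - \dim J(K)[\phi]\bigr).
\]
So the corollary reduces to showing that the left-hand side equals the Mordell--Weil rank $r$ of $J(K)$.

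Next I would invoke the Mordell--Weil theorem: $J(K)\cong \mathbf{Z}^{r}\oplus T$, where $T=J(K)_{\mathrm{tors}}$ is a finite abelian group. Tensoring (or taking the obvious quotients and $p$-torsion subgroups) gives
\[
J(K)/pJ(K) \cong (\mathbf{Z}/p\mathbf{Z})^{r}\oplus T/pT, \qquad J(K)[p]\cong T[p].
\]
Since $T$ is a finite abelian group, the elementary fact $\dim_{\mathbf{F}_p} T/pT = \dim_{\mathbf{F}_p} T[p]$ holds (both count the number of cyclic factors of $T$ whose order is divisible by $p$, via the structure theorem for finite abelian groups). Therefore
\[
\dim J(K)/pJ(K) - \dim J(K)[p] = r.
\]
Substituting into the identity from Proposition~\ref{psize} yields the stated formula.

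There is no real obstacle here; the proof is essentially a two-line bookkeeping argument combining Proposition~\ref{psize} with the Mordell--Weil theorem and the comparison of $p$-torsion and $p$-cotorsion of a finite abelian group. The only small point worth flagging is the hypothesis that $K$ contains $\zeta_p$, which is needed precisely so that Proposition~\ref{psize} applies (this is where $\phi$ is $K$-defined and the computation with $\phi$-kernels makes sense over $K$).
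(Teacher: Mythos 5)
Your proposal is correct and matches the paper's (very terse) proof, which simply states that the corollary follows immediately from Proposition~\ref{psize}; your filling in of the standard bookkeeping fact that $\dim J(K)/pJ(K)-\dim J(K)[p]$ equals the free rank, via the Mordell--Weil theorem and the equality $\dim_{{\bf F}_p}T/pT=\dim_{{\bf F}_p}T[p]$ for finite $T$, is exactly the intended argument.
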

This follows immediately from Proposition~\ref{psize}.

We conclude with a proposition suggested independently by
Armand Brumer, Michael Stoll and the referee. The proof appears
after \cite[Lemma 13.4]{PS}.

\begin{prop}
\label{Stoll}
Let $C$ be defined over $K$, a number field
not necessarily containing $\zeta_{p}$.
The Mordell-Weil rank of $J(K)$ is the quotient of the
Mordell-Weil rank of $J(K(\zeta_{p}))$ by $[K(\zeta_{p}):K]$.
\end{prop}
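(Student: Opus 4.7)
The plan is to put a ${\bf Q}(\zeta_p)$-vector space structure on $V := J(F)\otimes_{\bf Z}{\bf Q}$, where $F = K(\zeta_p)$, and then apply Galois descent to compare $V$ with $V^G = J(K)\otimes{\bf Q}$.

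First I would set $G = \Gal(F/K)$ of order $d = [F:K]$ and $k = K\cap {\bf Q}(\zeta_p)$. Standard Galois theory of the compositum $F = K\cdot {\bf Q}(\zeta_p)$ identifies $G$ with $\Gal({\bf Q}(\zeta_p)/k)$, yielding the identity $d\cdot [k:{\bf Q}] = p-1$; this is what will convert the final dimension count into the assertion. Since the automorphism $\tau(x,y) = (x,\zeta_p y)$ of $C_F$ induces an endomorphism making ${\bf Z}[\zeta_p]\hookrightarrow {\rm End}_F(J_F)$, the space $V$ carries a ${\bf Q}(\zeta_p)$-vector space structure, of finite dimension by Mordell--Weil. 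Writing $n = \dim_{{\bf Q}(\zeta_p)} V$ gives ${\rm rank}\, J(F) = n(p-1)$.

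The central observation, which I expect to be the main technical point, is that $G$ acts ${\bf Q}(\zeta_p)$-semilinearly on $V$. For $\sigma\in G$ with $\sigma(\zeta_p) = \zeta_p^g$, the Galois-conjugate endomorphism $\tau^\sigma$ is obtained by replacing the coefficient $\zeta_p$ in the defining formula for $\tau$ by $\zeta_p^g$, so $\tau^\sigma = \tau^g$. Consequently $\sigma(\tau(P)) = \tau^g(\sigma(P))$ for $P\in J(F)$, which translates to $\sigma(a\cdot v) = \sigma(a)\cdot \sigma(v)$ for all $a\in {\bf Q}(\zeta_p)$ and $v\in V$. The subtlety is to track Galois acting on the coefficients of $\tau$, not merely on points.

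With semilinearity in hand, Galois descent (Hilbert~90 for $GL_n$) applied to the extension ${\bf Q}(\zeta_p)/k$ with group $G$ gives $V \cong V^G\otimes_k {\bf Q}(\zeta_p)$, whence $\dim_k V^G = n$ and $\dim_{\bf Q} V^G = n[k:{\bf Q}]$. A standard averaging argument, using $J(F)^G = J(K)$ and the finiteness of $G$, identifies $V^G = J(K)\otimes{\bf Q}$, so ${\rm rank}\, J(K) = n[k:{\bf Q}]$. The ratio ${\rm rank}\,J(F)/{\rm rank}\,J(K) = (p-1)/[k:{\bf Q}] = d = [K(\zeta_p):K]$ then yields the claim.
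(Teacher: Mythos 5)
Your argument is correct. Note that the paper itself gives no proof of this proposition --- it only cites the proof following Lemma~13.4 of \cite{PS} --- and the argument there is essentially the one you give: view $J(K(\zeta_p))\otimes{\bf Q}$ as a ${\bf Q}(\zeta_p)$-vector space via the endomorphism induced by $(x,y)\mapsto(x,\zeta_p y)$, observe that $\Gal(K(\zeta_p)/K)$ acts semilinearly because $\sigma\tau\sigma^{-1}=\tau^{g}$ when $\sigma(\zeta_p)=\zeta_p^{g}$, and conclude by descent for semilinear actions together with $(J(K(\zeta_p))\otimes{\bf Q})^{G}=J(K)\otimes{\bf Q}$.
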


\subsection{Example where not all elements of $\Psi$ are rational}

\begin{prop}
\label{twelve}
Let $C$ be the projective curve given by the affine
equation $y^{3}=(x^{2}+1)
(x^{2}-4x+1)$
and let $J$ be its Jacobian. The group $J({\bf Q})$ has
Mordell-Weil rank 1 and the group $J({\bf Q}(\zeta_{3}))$ has Mordell-Weil
rank 2.
\end{prop}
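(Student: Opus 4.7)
The plan is to apply the algorithm of Section~\ref{ytop} with $K={\bf Q}(\zeta_3)$ and $\phi=1-\zeta_3$, computing the $(1-\zeta_3)$-Selmer group of $J(K)$ and matching its bound with explicit global classes. Since by Proposition~\ref{Stoll} the rank of $J({\bf Q})$ is half the rank of $J(K)$, it suffices to show that $J(K)$ has rank $2$. With $p=3$, $d=4$ and $g=(p-1)(d-1)/2=3$, neither $i$ nor $\sqrt{3}$ lies in $K$, so $f(x)=(x^2+1)(x^2-4x+1)$ splits over $K$ into two irreducible quadratics, and the $K$-algebra $L=K[T]/(f(T))$ decomposes as $L_1\times L_2$, where $L_1=K(i)$ and $L_2=K(\sqrt{3})$ are both isomorphic to ${\bf Q}(\zeta_{12})$. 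By Proposition~\ref{basis}, $J[\phi]$ is three-dimensional over ${\bf F}_3$; counting Galois-fixed ${\bf F}_3$-combinations of $[(\alpha_i,0)-\infty]$ modulo the relation $\sum_{i=1}^{4}[(\alpha_i,0)-\infty]=0$ (from the divisor of $y$) yields $\dim J(K)[\phi]=1$.

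For the upper bound I would carry out Steps 4--7 of Section~\ref{alg2}. The discriminant of $f$ is $-2^{12}\cdot 3$, so $S$ may be taken to consist of the inert prime of $K$ above $2$ and the ramified prime $\Gp=(1-\zeta_3)$ above $3$. By Proposition~\ref{split} the Selmer group embeds via $k\circ w$ into the kernel of the norm map on $L(S,3)$; the group $L(S,3)$ itself is determined using the factorization $L\cong L_1\times L_2$ together with standard $S$-unit and class-group data for ${\bf Q}(\zeta_{12})$. For each $\Gs\in S$, Corollary~\ref{localsize} supplies $\dim J(K_\Gs)/\phi J(K_\Gs)$, and the local image $F_\Gs(J(K_\Gs)/\phi J(K_\Gs))\subset L_\Gs^*/L_\Gs^{*3}$ is computed by evaluating $x-T$ on enough explicit local divisor classes via the formulas of Proposition~\ref{transl}. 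Intersecting these local conditions inside the norm kernel of $L(S,3)$ produces the Selmer group and hence an upper bound on $\dim J(K)/\phi J(K)$.

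For the matching lower bound I would exhibit two independent global classes. The point $(0,1)$ gives the ${\bf Q}$-rational class $[(0,1)-\infty]\in J({\bf Q})\subset J(K)$ whose image under $x-T$ is the class of $-T$ in $L^*/L^{*3}$; a direct calculation in each factor ${\bf Q}(\zeta_{12})$ determines whether this class is already trivial modulo $\phi J(K)$ and, if not, supplies one generator. A second class, needed to account for the $-1$-eigenspace for complex conjugation on $J(K)\otimes{\bf Q}$, should be sought among divisors supported on a pair of Galois-conjugate affine $K$-points of $C$, with its image read off via Proposition~\ref{transl}. Once two classes are shown to be independent in $J(K)/\phi J(K)$ modulo $J(K)[\phi]$ and the Selmer upper bound matches, Corollary~\ref{pMWrk} gives ${\rm rank}\,J(K)=2(2-1)=2$, and Proposition~\ref{Stoll} yields ${\rm rank}\,J({\bf Q})=1$.

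The principal obstacle will be the local analysis at $\Gp$, where wild ramification makes both $L_\Gp^*/L_\Gp^{*3}$ and $L(S,3)$ large, so tight control of the local image is needed to cut the Selmer group down to dimension exactly $\dim J(K)[\phi]+1=2$; a secondary difficulty is locating the second global generator explicitly, since the obvious rational points on $C$ are few and the required second point must genuinely lie outside $J({\bf Q})$.
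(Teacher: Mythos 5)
Your overall strategy is the paper's: work over $K={\bf Q}(\zeta_3)$ with $\phi=1-\zeta_3$, identify $L\cong{\bf Q}(\zeta_{12})\times{\bf Q}(\zeta_{12})$, use Proposition~\ref{split} to realize $H^1(K,J[\phi];S)$ inside the norm kernel of $L(S,3)$ with $S=\{2,\Gq\}$, bound the Selmer group by local images computed via Proposition~\ref{transl} and Corollary~\ref{localsize}, and finish with Corollary~\ref{pMWrk} and Proposition~\ref{Stoll}. (In the paper the local condition at the prime over $3$ alone already cuts the Selmer group down to dimension $2$, so the prime over $2$ is never needed; that is a convenience, not a difference of method.)

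The genuine gap is in your lower bound. You propose to exhibit \emph{two} classes of infinite order that are independent in $J(K)/\phi J(K)$ modulo $J(K)[\phi]$, the second one ``accounting for the $-1$-eigenspace of complex conjugation.'' No such second class exists, and searching for it would stall the argument: if two non-torsion classes were independent mod torsion in $J(K)/\phi J(K)$, then ${\rm dim}\,J(K)/\phi J(K)\geq 3$ and Corollary~\ref{pMWrk} would give rank $\geq 2(3-1)=4$, contradicting your own Selmer bound of $2$ and your final formula $2(2-1)=2$. The point you are missing is that a single $\phi$-descent class of infinite order already yields ${\bf Z}$-rank $2$, because the factor $p-1=2$ in Corollary~\ref{pMWrk} converts $\phi$-rank over ${\bf Z}[\zeta_3]$ into ${\bf Z}$-rank; concretely, the class generating the other eigenspace is essentially $[(0,\zeta_3)-\infty]=\zeta_3\cdot[(0,1)-\infty]$, which is congruent to $[(0,1)-\infty]$ modulo $\phi J(K)$ since $\zeta_3\equiv 1\pmod{\phi}$, hence invisible to the $\phi$-descent. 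So the correct bookkeeping is: the Selmer group has dimension $2$, generated by the images of the torsion class $[(i,0)+(-i,0)-2\infty]$ and of $[(0,1)-\infty]$; thus ${\rm dim}\,J(K)/\phi J(K)=2$, ${\rm dim}\,J(K)[\phi]=1$, and the rank is $2(2-1)=2$. With that correction the rest of your plan goes through as in the paper.
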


\begin{proof}
Let $K={\bf Q}(\zeta_{3})$ and $f(x)=(x^{2}+1)
(x^{2}-4x+1)$. Let $\phi = 1-\zeta_{3}$.
We first compute
$J(K)/\phi J(K)$.
The roots of $f$ are $\pm i$ and $2\pm \sqrt{3}$.
We have $L=K[T]/(f(T))\cong
K(i)\times K(i)$ by $T\mapsto (i,2+\sqrt{3})$ and $L^{\ast}/L^{\ast 3}$
is isomorphic to $(K(i)^{\ast}/K(i)^{\ast 3})^{2}$.
In $K(i)={\bf Q}(\zeta_{12})$, we fix
$\zeta_{3} = (-1+\sqrt{-3})/2$ and
$\sqrt{3}=i\sqrt{-3}$.
The bad primes of $C$ over ${\bf Q}$ are 2 and 3.
There is one prime of $K(i)$ over 2 generated by $(1+i)$; it is inert in $K$
and ramifies in $K(i)$.  There is one prime $\Gq$ of $K(i)$ over 3; it
ramifies in $K$ and then is inert up to $K(i)$.
We will denote the restriction of these primes to $K$ by 2 and $\Gq$.
Since $K(i)$ is a totally
imaginary extension of the rationals, it has unit rank 1.
We note that $i-\zeta_{3}$ is a
fundamental unit.
The class
group of the field $K(i)$ is trivial.  Thus $K(i)(S,3)$ is $\langle
i-\zeta_{3}, \zeta_{3}, 1+i, \sqrt{-3}\rangle$.

{}From Proposition~\ref{split},
the group $H^{1}(K,J[\phi];S)$ is the kernel of the norm from
$L(S,3)\cong K(i)(S,3)^{2}$ to $K^{\ast}/K^{\ast 3}$.
The number $\zeta_{3}(i-\zeta_{3})$
generates the kernel
of the norm from $K(i)(S,3)$ to
$K^{\ast}/K^{\ast 3}$. Thus $H^{1}(K,J[\phi];S)=\langle (i-\zeta_{3},
(i-\zeta_{3})^{2}), (\zeta_{3},\zeta_{3}^{2}),(1+i,(1+i)^{2}),
(\sqrt{-3},\sqrt{-3}^{2}), (1,\zeta_{3}(i-\zeta_{3}))\rangle$.
The group $S^{\phi}(K,J)$
is the intersection of the groups $\beta_{\Gs}^{-1}(F_{\Gs}(J(K_{\Gs})
/\phi J(K_{\Gs})))$ for
the primes $\Gs=2$ and $\Gq$ of $K$.

At this point let us find the images of the known elements of $J(K)$
by the map $x-T$.
The group $J(K)[\phi]$ has order 3 and is generated by the divisor
class $[(i,0)+(-i,0)-2\infty]$.
In $J(K)$ there is also the divisor
class $[(0,1)-\infty]$. In the following table we present
the images of these two classes in $H^{1}(K,J[\phi];S)$ by the map $x-T$.
Above each coordinate is written $x-\alpha$ to remind
us how to compute that coordinate. We use Proposition~\ref{transl}
to compute
the images of $[(0,1)-\infty]$ and
$[(i,0)+(-i,0)-2\infty]$.
\[
\begin{array}{rrccc}
 & & & x-i & x-(2+\sqrt{3}) \\
 & [(i,0)+(-i,0)-2\infty]
 & \mapsto & (1+i)^{2} & \zeta_{3}^{2}(i-\zeta_{3})^{2}(1+i) \\
 &[(0,1)-\infty] & \mapsto & 1 & \zeta_{3}^{2}(i-\zeta_{3})^{2} \end{array}\]
{}From the first exact sequence in the proof of Proposition~\ref{psize},
we know $J(K)[\phi]/\phi (J(K)[3])$ injects into $J(K)/\phi J(K)$ which
injects into $L^{\ast}/L^{\ast 3}$. Thus we know that
$J(K)[3]=J(K)[\phi]$ and is generated by $[(i,0)+(-i,0)-2\infty]$
since its image is not trivial.
In addition we see that the image of $[(0,1)-\infty]$ is independent of
the image of torsion and so the divisor class
has infinite order. We will show that
$S^{\phi}(K,J)$ is generated by the images of
$[(i,0)+(-i,0)-2\infty]$ and
$[(0,1)-\infty]$.

Let us describe the groups
$J(K_{\Gq})/\phi
J(K_{\Gq})$ and $L_{\Gq}^{\ast}/L_{\Gq}^{\ast 3}\cong
(K_{\Gq}(i)^{\ast}/K_{\Gq}(i)^{\ast 3})^{2}$.
The group $K_{\Gq}(i)^{\ast}/K_{\Gq}(i)^{\ast 3}$ is
$\langle \sqrt{-3}, 1+\sqrt{-3}, 1+i\sqrt{-3}, 1+\sqrt{-3}^{2}$,
$1+i\sqrt{-3}^{2}$, $1+\sqrt{-3}^{3}\rangle$. Let us rename those numbers
$\langle A, B, E, \Gamma, \Phi, \Delta\rangle$, to agree with the
notation in \cite{KS}.
Multiplicatively, anything that is
1 modulo 9 is a cube, as are $1\pm i\sqrt{-3}^{3}$ and $-1$.
We have $[i-\zeta_{3},\zeta_{3},1+i,\sqrt{-3}]\equiv [BE,B^{2}\Gamma,
\Gamma^{2},A]\, {\rm mod}\, K_{\Gq}(i)^{\ast 3}$.
Thus the kernel of $\beta_{\Gq}$ is trivial.
{}From Corollary~\ref{localsize}
we know
${\rm dim}\, J(K_{\Gq})/\phi J(K_{\Gq})$
is the sum of $3[K_{\Gq}:{\bf Q}_{3}(\zeta_{3})]$, which is 3, and
${\rm dim}\, J(K_{\Gq})[\phi]$, which is 1
since $K_{\Gq}\cap K(J[\phi])=K$, for a total of 4.
In the following table we list generators of
$J(K_{\Gq})/\phi
J(K_{\Gq})$
and their images in $L_{\Gq}^{\ast}/L_{\Gq}^{\ast 3}\cong
(K_{\Gq}(i)^{\ast}/K_{\Gq}(i)^{\ast 3})^{2}$.
\[
\begin{array}{rrccc}
 &  & & x-i & x-(2+\sqrt{3}) \\
 & [(i,0)+(-i,0)-2\infty] & \mapsto & \Gamma & \Gamma E^{2} \\
 & [(0,1)-\infty] & \mapsto & 1 & \Gamma^{2}E^{2} \\
 & [(4,y_{1})-\infty] & \mapsto & \Phi & \Gamma E \\
 & [(\frac{1+\sqrt{-3}^{3}}{\sqrt{-3}^{3}},y_{2})-\infty]
 & \mapsto & \Delta & \Delta^{2}
\end{array}\]

A small amount of linear algebra shows that
$\beta_{\Gq}^{-1}(F_{\Gq}(J(K_{\Gq})/\phi J(K_{\Gq})))$
is the same as the group generated by the images of $[(i,0)+(-i,0)-2\infty]$
and $[(0,1)-\infty]$. So that is the Selmer group and those two
divisor classes generate $J(K)/\phi J(K)$.
We do not even need $J(K_{2})/\phi J(K_{2})$.
Thus, from
Corollary~\ref{pMWrk}, the Mordell-Weil rank of $J(K)$ is 2. One can
verify that
the divisor classes $[(0,1)-\infty]$ and $[(0,\zeta_{3})-\infty]$ have
infinite order and are
independent.
From Proposition~\ref{Stoll}, the Mordell-Weil rank of $J({\bf Q})$ is 1.
\end{proof}

Using a straightforward computation in the number field gotten by
adjoining to ${\bf Q}$ the root of the characteristic polynomial of
Frobenius of $J$ over ${\bf F}_{7}$,
Michael Stoll has shown that $J$
is absolutely simple. This type of argument appears in the proof of
\cite[Prop.\ 14.4]{PS}.

\subsection{Examples with Mordell-Weil rank 0}

In this section we find solutions of two diophantine equations over
infinitely many number fields.
First let us state two propositions. Each follows from
the Riemann-Roch theorem and results in  \cite[\S 5]{Mi1} and is
well-known.

\begin{prop}
\label{genus2}
Let $f(x)$ be a polynomial of degree 5 or 6,
defined over a field $K$ of characteristic other than 2 with
distinct roots in $\overline{K}$.
Let $C$ be the normalization of the curve whose
affine equation is $y^{2}=f(x)$. Every element of ${\rm Pic}^{2}(C)$
has a unique representation by an effective divisor, with the exception
of the canonical class. In addition, every $K$-rational divisor class
of degree 2 can
be represented by an effective $K$-rational divisor.
\end{prop}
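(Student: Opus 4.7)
The plan is to apply the Riemann--Roch theorem on the smooth projective curve $C$, which has genus $g=2$ (the usual formula for hyperelliptic curves $y^{2}=f(x)$ with $\deg f\in\{2g+1,2g+2\}$). For any divisor $D$ of degree $2$, Riemann--Roch gives
\[
\ell(D) - \ell(K_{C}-D) = \deg(D) - g + 1 = 1,
\]
where $K_{C}$ is a canonical divisor. In particular $\ell(D)\geq 1$, so every element of $\mathrm{Pic}^{2}(C)$ contains at least one effective divisor.

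Next I would split into the two cases. If $[D]\neq[K_{C}]$, then $[K_{C}-D]$ is a nontrivial element of $\mathrm{Pic}^{0}(C)$, which forces $\ell(K_{C}-D)=0$ and hence $\ell(D)=1$; so $|D|$ consists of a single effective divisor and the representation is unique. If instead $[D]=[K_{C}]$, then $\ell(D)=\ell(K_{C})=g=2$, so $|K_{C}|$ is a $\mathbb{P}^{1}$-family of effective divisors and uniqueness fails. This proves the first assertion.

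For the second assertion, suppose $[D]\in\mathrm{Pic}^{2}(C)$ is $K$-rational. When $[D]\neq[K_{C}]$, the unique effective $E\sim D$ is automatically $\mathrm{Gal}(\overline{K}/K)$-stable (Galois permutes the effective representatives in a $K$-rational class, and here there is only one), so $E$ is $K$-rational. When $[D]=[K_{C}]$ one must exhibit a $K$-rational effective representative by hand. I would take the polar divisor of the function $x\in K(C)$: it is effective of degree $2$ (equal to $2\infty$ when $\deg f=5$ and to $\infty_{+}+\infty_{-}$ when $\deg f=6$), it is $K$-rational as the polar divisor of a $K$-rational function, and it lies in $|K_{C}|$ because on a hyperelliptic curve of genus $2$ the canonical class coincides with the hyperelliptic pencil $g_{2}^{1}$.

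The only step requiring real attention is the $K$-rationality in the degree $6$ case, where $\infty_{+}$ and $\infty_{-}$ may individually be defined only over a quadratic extension of $K$; the point is that their sum is $K$-rational precisely because it is the polar divisor of the $K$-rational function $x$. Everything else is a direct application of Riemann--Roch together with standard facts about the canonical class on a hyperelliptic curve, so I would simply cite \cite[\S 5]{Mi1} for those background results.
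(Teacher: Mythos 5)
Your proof is correct and follows exactly the route the paper intends: the paper gives no written-out argument, stating only that the proposition ``follows from the Riemann--Roch theorem and results in [Mi1, \S 5],'' and your Riemann--Roch computation on the genus-2 curve, the uniqueness/non-uniqueness dichotomy according to whether the class is canonical, and the use of the polar divisor of $x$ as a $K$-rational canonical representative supply precisely the details being alluded to. Nothing further is needed.
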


\begin{prop}
\label{genus3}
Let $C$ be a smooth plane quartic curve defined over a field $K$.
Every element of ${\rm Pic}^{3}(C)$
has a unique representation by an effective divisor unless the divisor class
contains $P_{1}+P_{2}+P_{3}$ where the three $P_{i}$'s are collinear.
In the latter case $[P_{1}+P_{2}+P_{3}]=[Q_{1}+Q_{2}+Q_{3}]$ if and only
if there are lines $L_{1}$ and $L_{2}$ and a point $R$ such that
$L_{1}.C=P_{1}+P_{2}+P_{3}+R$ and $L_{2}.C=Q_{1}+Q_{2}+Q_{3}+R$.
Assume, in addition, that $C$ has a $K$-rational point.
Every $K$-rational divisor class of degree 3
contains an effective $K$-rational divisor.
\end{prop}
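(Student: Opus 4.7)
The plan is to apply Riemann--Roch on $C$, which has genus $g=3$ and whose canonical class $K_{C}$ is represented by $L\cdot C$ for any line $L\subset {\bf P}^{2}$, so $\deg K_{C}=4$. For a divisor $D$ of degree $3$, Riemann--Roch gives $l(D)-l(K_{C}-D)=3-g+1=1$. Since $\deg (K_{C}-D)=1$ and $C$ is not rational, $|K_{C}-D|$ cannot be a pencil, so $l(K_{C}-D)\in \{0,1\}$ and the argument splits into two cases.

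If $l(K_{C}-D)=0$, then $l(D)=1$, so $|D|$ contains a unique effective divisor. If instead $l(K_{C}-D)=1$, the unique effective representative of $K_{C}-D$ is a single point $R$, so $D\sim K_{C}-R$, and the effective divisors in $|D|$ are exactly those of the form $L\cdot C-R$ for $L$ a line in ${\bf P}^{2}$ through $R$; writing $L\cdot C-R=P_{1}+P_{2}+P_{3}$ exhibits $|D|$ as a pencil of collinear triples. Conversely, any collinear triple $P_{1}+P_{2}+P_{3}$ lies on a unique line $L_{1}$, and $L_{1}\cdot C-(P_{1}+P_{2}+P_{3})$ is then an effective point $R$ with $D\sim K_{C}-R$, forcing $l(K_{C}-D)\geq 1$. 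Thus non-uniqueness of the effective representative holds exactly when $[D]$ contains a collinear triple, and in that case two effective representatives $P_{1}+P_{2}+P_{3}$ and $Q_{1}+Q_{2}+Q_{3}$ are linearly equivalent if and only if they have the form $L_{1}\cdot C-R$ and $L_{2}\cdot C-R$ for the common residual point $R$ representing $K_{C}-D$. This yields the first two assertions.

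For the third assertion, assume $C$ has a $K$-rational point $P_{0}$. By \cite[p.\ 168]{Mi1}, every element of ${\rm Pic}^{0}(C)(K)$ is represented by a $K$-rational divisor, and adding $3P_{0}$ shows that every $K$-rational class of degree $3$ is likewise represented by a $K$-rational divisor $D$. Applying the dichotomy above to such $D$: if $l(K_{C}-D)=0$, the unique effective divisor in $|D|$ is Galois-invariant, hence $K$-rational; if $l(K_{C}-D)=1$, the unique effective representative $R$ of $K_{C}-D$ is Galois-invariant and thus $K$-rational, so the pencil of lines through $R$ in ${\bf P}^{2}$ is a ${\bf P}^{1}$ over $K$, providing $K$-rational lines $L$ and hence $K$-rational effective divisors $L\cdot C-R$ in $|D|$.

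The only delicate point is ensuring in the second case of the third assertion that the residual point $R$ descends to $K$; this is immediate from the uniqueness of the effective representative of $K_{C}-D$. The rest is direct Riemann--Roch together with elementary intersection theory in ${\bf P}^{2}$, so no serious obstacle is anticipated.
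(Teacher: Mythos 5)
Your proof is correct and follows exactly the route the paper indicates: the paper gives no written proof, saying only that the proposition ``follows from the Riemann--Roch theorem and results in \cite[\S 5]{Mi1} and is well-known,'' and your argument is a careful execution of that Riemann--Roch dichotomy ($l(K_{C}-D)=0$ versus $1$, with effective canonical divisors being the hyperplane sections of the canonically embedded quartic), together with the Galois-invariance observations needed for the rationality statement.
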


From these follow special cases of the fact that when $C$ has a $K$-rational
point and the group $J(K)$ is finite, then we can describe all points on
$C$ over fields of degree over $K$, less than or equal to the genus.
Though we can describe all such points, it is a more difficult problem to
pick one of the fields and decide which of those points are defined
over that field.
We present examples using each of the previous propositions.

\begin{prop}
\label{five}
The only ${\bf Q}$-rational
points on the curve $C$ given by $y^{2}=x^{5}+1$ are
$\infty$, $(0,\pm 1)$ and $(-1,0)$. The only other points on $C$ rational over
quadratic extensions are $(1+i, \pm (1-2i))$, $(1-i, \pm (1+2i))$
and those with $x\in {\bf Q}$.
\end{prop}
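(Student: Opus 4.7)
First I would recast the problem in the framework of Section~\ref{ytop}. Exchanging the roles of $x$ and $y$ identifies $C: y^{2}=x^{5}+1$ over ${\bf Q}$ with $C': y^{5}=x^{2}-1$, which is of the form $y^{p}=f(x)$ with $p=5$ and $f$ of degree $d=2$ coprime to $p$. The $(1-\zeta_{5})$-descent machinery therefore applies to the Jacobian over $K={\bf Q}(\zeta_{5})$. Following the template of Proposition~\ref{twelve}, I would take $L=K[T]/(T^{2}-1)\cong K\times K$ via $T\mapsto (1,-1)$, let $S$ consist of the primes of $K$ over $2$ and $5$, and use Proposition~\ref{split} to identify $H^{1}(K,J[\phi];S)$ with the kernel of the norm from $L(S,5)\cong K(S,5)^{2}$ to $K^{\ast}/K^{\ast 5}$. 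The map $F=x-T$ of Section~\ref{ytop} sends good divisors into this group.

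The bulk of the work is then to compute the local images $F_{\Gs}(J(K_{\Gs})/\phi J(K_{\Gs}))$ at the primes $\Gs | 2$ and $\Gs | 5$, using Corollary~\ref{localsize} to fix their ${\bf F}_{5}$-dimensions. I would map the visible rational classes --- the Weierstrass class $[(-1,0)-\infty]$, the class $[(0,1)-\infty]$ (which has order $5$ since ${\rm div}(y-1)=5(0,1)-5\infty$), and the quadratic-pair classes $D_{1}=[(1+i,1-2i)+(1-i,1+2i)-2\infty]$ and $D_{2}=-D_{1}$ arising from the points listed in the statement --- into $L^{\ast}/L^{\ast 5}$ via $F$, and check that, after intersecting with the local conditions, they already span $S^{\phi}(K,J)$. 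Corollary~\ref{pMWrk} would then force the Mordell-Weil rank of $J(K)$ to be $0$, and Proposition~\ref{Stoll} transfers this to rank $J({\bf Q})=0$.

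With rank $0$ in hand, I would pin down $J({\bf Q})$ itself: the classes above give a lower bound, and the injection $J({\bf Q})_{{\rm tors}}\hookrightarrow J({\bf F}_{\ell})$ at one or two primes $\ell$ of good reduction (say $\ell=7, 11$) provides the matching upper bound after direct point-counting on the reduction of $C$. Finally I would invoke Proposition~\ref{genus2}: adding $[2\infty]$ identifies each nontrivial class in $J({\bf Q})$ with a unique effective ${\bf Q}$-rational degree-$2$ divisor $E$ on $C$, the lone exception being the trivial class, whose effective representatives in ${\rm Pic}^{2}$ form the canonical linear system and are precisely the hyperelliptic pairs $\{(a,y_{0}),(a,-y_{0})\}$ with $a\in{\bf Q}$ --- these account for \emph{all} quadratic points with $x\in{\bf Q}$ (as well as the rational pair $(0,1)+(0,-1)$). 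Each remaining effective $E$ decomposes either as a sum of two ${\bf Q}$-rational points or as a Galois-conjugate pair of quadratic points; the classes $\pm D_{1}$ will account for $(1\pm i,\pm(1\mp 2i))$, and the other torsion classes will account for the remaining combinations of rational points.

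The chief obstacle is the local Selmer computation at the wildly ramified prime $\Gs | 5$, where $K_{\Gs}^{\ast}/K_{\Gs}^{\ast 5}$ is sizable and must be presented by explicit generators (in analogy with the $A,B,E,\Gamma,\Phi,\Delta$ used in Proposition~\ref{twelve}), and one must produce enough local divisor classes to generate $J(K_{\Gs})/\phi J(K_{\Gs})$ with computable $F_{\Gs}$-images. Once the Selmer group is shown to be no larger than the torsion image, the descent to ${\bf Q}$ via Proposition~\ref{Stoll} and the translation through Proposition~\ref{genus2} are essentially bookkeeping.
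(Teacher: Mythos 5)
Your proposal follows essentially the same route as the paper: rewrite the curve as $x^{5}=(y+1)(y-1)$, run the $(1-\zeta_{5})$-descent over ${\bf Q}(\zeta_{5})$ with $L\cong K\times K$, show the Selmer group is generated by the image of $[(0,1)-\infty]$ so the rank is $0$, determine $J({\bf Q})\cong {\bf Z}/10{\bf Z}$ by combining the visible torsion with reduction modulo primes of good reduction, and enumerate the rational and quadratic points via Proposition~\ref{genus2}, with the canonical class accounting for exactly the quadratic points having $x\in{\bf Q}$. The only cosmetic difference is that the paper finds the single local condition at the prime over $5$ already cuts $H^{1}(K,J[\phi];S)$ down to one dimension, so the prime over $2$ is never needed (and the $2$-torsion class you list maps trivially under $F$ since the target has exponent $5$).
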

Note that we could compute a 2-Selmer group or a $(1-\zeta_{5})$-Selmer group.
We will do the latter, as there are already examples of the former in
the literature.

\begin{proof}
We can rewrite the curve as $x^{5}=(y+1)(y-1)$ and let $K={\bf Q}(\zeta_{5})$.
We use the endomorphism $\phi=1-\zeta_{5}$ of $J$, the Jacobian of $C$.
The bad primes are the single prime over 2, which we also denote by 2,
and the single
prime $\Gp=1-\zeta_{5}$ over 5. The field $K$ has class number 1 and unit rank
1, with fundamental unit $1+\zeta_{5}$.
Thus $K(S,5)=\langle \zeta_{5}, 1+\zeta_{5}, 2, 1-\zeta_{5}\rangle$
and $H^{1}(K,J[\phi];S)$ is the kernel of the norm from $L(S,5)=K(S,5)^{2}$
to $K^{\ast}/K^{\ast 5}$.

We have $K_{\Gp}^{\ast}/K_{\Gp}^{\ast 5} = \langle \Gp, 1+\Gp,
1+\Gp^{2}, 1+\Gp^{3}, 1+\Gp^{4}, 1+\Gp^{5}\rangle$. Let us rename
those elements of $K_{\Gp}$ by $\langle a, b, c, d, e, f\rangle$.
Any element that is 1 modulo $\Gp^{6}$ is a fifth power, as are the
fourth roots of unity.
The vectors $[\zeta_{5},1+\zeta_{5},2,1-\zeta_{5}]\equiv
[b^{4}c^{4}e^{4},b^{2}c^{4}d^{2}e^{4},e^{3}f,a]\, {\rm mod}\,
K_{\Gp}^{\ast 5}$.
Thus the kernel of $\beta_{\Gp}$ is trivial.
{}From Corollary~\ref{localsize},
$J(K_{\Gp})/\phi J(K_{\Gp})$ has dimension 3.
In the following table we list generators
of $J(K_{\Gp})/\phi J(K_{\Gp})$
and their images in $L_{\Gp}^{\ast}/L_{\Gp}^{\ast 5}$.

\[
\begin{array}{rrcc}
 & & y+1 & y-1 \\
& [(x_{1},\Gp^{3}) - \infty]\mapsto & d & d^{4} \\
& [(x_{2},\Gp^{4}) - \infty]\mapsto & e & e^{4} \\
& [(x_{3},\Gp^{5}) - \infty]\mapsto & f & f^{4}
\end{array}\]
We see that $\beta_{\Gp}^{-1}$ of the image of $J(K_{\Gp})/\phi J(K_{\Gp})$
is the group generated by the image of
$[(0,1)-\infty]$.
So that is the Selmer group
and $J(K)/\phi J(K)$ is generated by that divisor class. We do not even
need $J(K_{2})/\phi J(K_{2})$.
{}From Corollary~\ref{pMWrk},
we see that $J(K)$ has Mordell-Weil rank 0.

Now $\# J({\bf Q})$ is at least 10 since $J({\bf Q})$ contains
$[(0,1)-\infty]$ of order 5 and $[(-1,0)-\infty]$ of order 2.
By computing $\# J({\bf F}_{p})$ for a few primes we can prove that
the order of $J({\bf Q})$ divides 10 so it is equal to 10.
Let $D=[(0,1)+(-1,0)-2\infty]$. Then
$2D=[2(0,1)-2\infty]$, $3D=[(1+i,1-2i)+(1-i,1+2i)-2\infty]$,
$4D=[(0,-1)+\infty-2\infty]$,
$5D=[(-1,0)+\infty-2\infty]$, $6D=[(0,1)+\infty-2\infty]$,
$7D=[(1+i,-1+2i)+(1-i,-1-2i)-2\infty]$,
$8D=[2(0,-1)-2\infty]$, $9D=[(0,-1)+(-1,0)-2\infty]$, $10D=[2\infty-
2\infty]=0$.


We have a bijection of the sets
$J({\bf Q})$ and ${\rm Pic}^{2}(C)({\bf Q})$
by $[P+Q-2\infty] \mapsto [P+Q]$.
{}From Proposition~\ref{genus2},
if $P$ is a ${\bf Q}$-rational point of $C$, then $[P+\infty-2\infty]$ must
appear in the above list. If $P$ is defined over a quadratic extension
of ${\bf Q}$ and $\overline{P}$ is its conjugate, then $[P+\overline{P}-
2\infty]$
must appear in the above list, unless $[P+\overline{P}-2\infty]$
is the canonical
class. A simple calculation shows that
if that is the case then $x(P)\in {\bf Q}$.
\end{proof} Since $J$ has complex multiplication by a cyclic, quartic, totally
imaginary field it is absolutely simple. See \cite{St2} for more discussion
of the Mordell-Weil ranks of the Jacobians of curves of the form
$y^{2}=x^{l}+k$ where $l$ is an odd prime.

\begin{prop}
\label{consec}
The only ${\bf Q}$-rational points on the curve $C$ given by
$y^{3}=x(x-1)(x-2)(x-3)$ are $\infty$ and those on $y=0$.
The only other points over quadratic extensions of ${\bf Q}$ are those on
$y=-1$
and $y=2$. There are 12 conjugate triples of points over cubic extensions
that are not collinear. All other points over cubic extensions can be
obtained by finding the other three points of intersection of a
${\bf Q}$-rational
line with a point of $C({\bf Q})$.
\end{prop}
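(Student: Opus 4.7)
The plan is to mirror Proposition~\ref{five}: compute the $(1-\zeta_{3})$-Selmer group of the Jacobian $J$ over $K={\bf Q}(\zeta_{3})$, conclude that $J(K)$ has Mordell-Weil rank $0$, pin down $J({\bf Q})$ as a finite group, and then translate each class in ${\rm Pic}^{3}(C)({\bf Q})\cong J({\bf Q})$ back into effective divisors via Proposition~\ref{genus3}.

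For the setup, the roots of $f(x)=x(x-1)(x-2)(x-3)$ are already rational, so $L=K[T]/(f(T))\cong K^{4}$ diagonally and $L^{\ast}/L^{\ast 3}\cong (K^{\ast}/K^{\ast 3})^{4}$. The field $K$ has class number one and its only units are roots of unity, while the bad primes of $C$ are the inert prime $(2)$ and the ramified prime $\Gp=(1-\zeta_{3})$. Hence $K(S,3)=\langle\zeta_{3},2,1-\zeta_{3}\rangle$ is $3$-dimensional over ${\bf F}_{3}$, and Proposition~\ref{split} identifies $H^{1}(K,J[\phi];S)$ with the $9$-dimensional kernel of the norm map $K(S,3)^{4}\to K^{\ast}/K^{\ast 3}$.

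For the local conditions, at $\Gp$ all four roots of $f$ lie in $K_{\Gp}$, so Corollary~\ref{localsize} gives $\dim J(K_{\Gp})/\phi J(K_{\Gp})=3+3=6$. I would produce six independent good divisors over $K_{\Gp}$---the three $\phi$-torsion classes $[(\alpha_{i},0)-\infty]$ together with three classes lifted from the reduction---and record their images in $(K_{\Gp}^{\ast}/K_{\Gp}^{\ast 3})^{4}$ via Proposition~\ref{transl}. An analogous computation at the inert prime $\Gq\mid 2$ yields a $3$-dimensional local image. Intersecting $\beta_{\Gp}^{-1}$ and $\beta_{\Gq}^{-1}$ of these local images inside the $9$-dimensional Selmer candidate should cut it down to the span of the images of the classes $[(\alpha_{i},0)-\infty]$ and of the divisors coming from the obvious quadratic points on $y=-1$ and $y=2$. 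Corollary~\ref{pMWrk} then forces rank zero over $K$; Proposition~\ref{Stoll} transfers this to ${\bf Q}$; and point counts of $J({\bf F}_{\ell})$ for a few small good primes $\ell$ pin down $J({\bf Q})$ exactly.

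For the classification, shift each class of $J({\bf Q})$ by $[3\infty]$ into ${\rm Pic}^{3}(C)({\bf Q})$. By Proposition~\ref{genus3}, each such class has a unique effective ${\bf Q}$-rational representative unless its three points are collinear, in which case the class determines, and is determined by, a residual point $R\in C({\bf Q})$, and the full set of effective representatives is the pencil of triples cut out on $C$ by the ${\bf Q}$-rational lines through $R$. Walking through $J({\bf Q})$ and sorting each non-collinear effective triple into three rational points, a rational point plus a conjugate quadratic pair, or a Galois-conjugate cubic triple should yield precisely the $12$ claimed cubic triples; the five collinear classes, one for each point of $C({\bf Q})$, recover the remaining cubic points as the residuals of ${\bf Q}$-rational lines through those points. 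The main obstacle is the local work: starting from a $9$-dimensional candidate Selmer group and imposing a $6$-dimensional condition at $\Gp$ together with a $3$-dimensional one at $\Gq$ requires substantial explicit bookkeeping in $(K_{\Gp}^{\ast}/K_{\Gp}^{\ast 3})^{4}$, and verifying that the known torsion and quadratic-point divisors exhaust the intersection is the crux of the rank-zero conclusion.
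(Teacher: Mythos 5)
Your overall strategy is the same as the paper's: a $(1-\zeta_{3})$-descent over $K={\bf Q}(\zeta_{3})$ to get rank $0$, determination of $J({\bf Q})$ as a finite group, and then the classification of low-degree points via Proposition~\ref{genus3}. Your Selmer-group bookkeeping (the $9$-dimensional global candidate from Proposition~\ref{split}, the $6$-dimensional local condition at the ramified prime above $3$ and the $3$-dimensional one at the inert prime above $2$ from Corollary~\ref{localsize}) is exactly the computation the paper compresses into ``the techniques in earlier examples.'' One wobble there: for Corollary~\ref{pMWrk} to give rank $0$ the Selmer group must be exactly $3$-dimensional, equal to the image of $J[\phi]$. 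The classes coming from the quadratic points must contribute nothing new --- the $y=-1$ class is $2$-torsion and so dies in $J(K)/\phi J(K)$, and the $3$-parts of the $y=2$ classes already lie in $J[\phi]$ --- so listing them as additional spanning elements of the Selmer group is at best redundant and at worst inconsistent with the rank-$0$ conclusion you then draw.

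The genuine gap is the sentence ``point counts of $J({\bf F}_{\ell})$ for a few small good primes $\ell$ pin down $J({\bf Q})$ exactly.'' Orders alone do not suffice here: the paper computes $\# J({\bf F}_{5})=4\cdot 27$ and $\# J({\bf F}_{19})=16\cdot 27\cdot 13$, whose gcd is $108$, while $\# J({\bf Q})=54$. To kill the extra factor of $2$ one must compare group structures: the paper shows the $2$-part of $J({\bf F}_{5})$ is cyclic of order $4$ (using the divisor of $y+1$ over ${\bf F}_{5}$ together with Proposition~\ref{genus3}), while the $2$-part of $J({\bf F}_{19})$ has exponent $2$ (the ten ${\bf F}_{19}$-rational bitangents give nine distinct classes of order $2$ inside a $2$-part of order $16$). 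Only the combination forces the $2$-part of $J({\bf Q})$ to be ${\bf Z}/2{\bf Z}$, generated by the bitangent class from $y=-1$, whence $J({\bf Q})\cong ({\bf Z}/3{\bf Z})^{3}\oplus {\bf Z}/2{\bf Z}$. Your plan needs this (or an equivalent) structural argument; without it the enumeration of ${\rm Pic}^{3}(C)({\bf Q})$, and hence the count of $12$ non-collinear cubic triples, does not get off the ground. Your final classification step, including the treatment of the five collinear classes as residuals of pencils of rational lines through points of $C({\bf Q})$, matches the paper.
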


\begin{proof}
We can use $\phi=1-\zeta_{3}$ and
the techniques in earlier examples, to show that
$J$ has trivial Mordell-Weil rank over ${\bf Q}(\zeta_{3})$ and hence
over ${\bf Q}$. Let us compute $J({\bf Q})$.
We already have all of $J[\phi]$
rational over ${\bf Q}$. The line $y=-1$ is bitangent to $C$ and
meets the
curve at $2(x_{1},-1)+2(x_{2},-1)$ where the $x_{i}$ are the roots of
$x^{2}-3x+1$.
A line $L$ is a bitangent of
$C$ if the intersection divisor of $L$ with $C$ is
$L.C=2P+2Q$ for points $P$ and $Q$ of $C$ (not necessarily distinct).
We have a bijection of the sets $J({\bf Q})$ and
${\rm Pic}^{3}(C)({\bf Q})$ by $[P+Q+R-3\infty]\mapsto [P+Q+R]$.
{}From Proposition~\ref{genus3}, the order of $[(x_{1},-1)+(x_{2},-1)+\infty
-3\infty]$ is 2.

The primes of bad reduction over ${\bf Q}$ are 2 and 3.
The characteristic polynomial of the Frobenius of $J$ over ${\bf F}_{5}$
is $f_{5}(t)=t^6 -3t^4 - 15t^2 + 125$ which factors over
${\bf Q}$ into irreducible
quadratic and quartic factors. Thus $J$ is isogenous over ${\bf Q}$
to the sum of the elliptic curve $E$ given by
$y^{3}=(\hat{x}-9/4)(\hat{x}-1/4)$ (where $\hat{x} = (x-\frac{3}{2})^{2}$)
and a 2-dimensional  abelian variety which is simple over ${\bf Q}$.
In addition $\# J({\bf F}_{5})=f_{5}(1)=4\cdot 27$.
The divisor of $y+1$ is $4(-1,-1)-4\infty$ over ${\bf F}_{5}$. From
Proposition~\ref{genus3}, the order of $[(-1,-1)+2\infty -3\infty]$ is 4
in $J({\bf F}_{5})$. So the 2-power part
of $J({\bf F}_{5})$ is a cyclic group of order 4.

We have $\# J({\bf F}_{19})=16\cdot 27\cdot 13$. The curve has 10 rational
bitangents over ${\bf F}_{19}$. They are
the line at infinity, $uy=-1$, $uy=4x+10$
and $uy=10x+2$ where $u^{3}=1$. This gives us 9 divisor classes of the form
$[P_{1}+P_{2}+\infty - 3\infty]$ where $2P_{1}+2P_{2}$ is the intersection divisor
of $C$ with one of the ${\bf F}_{19}$-rational bitangents which is
not the line at infinity. Each of these 9 divisor
classes is different and has order 2, from Proposition~\ref{genus3}.
The 2-power part of $J({\bf F}_{19})$ has 16
elements and at least 9 have order 2. Thus the 2-power part has exponent 2.
Putting together the information from the reductions at 5 and 19, we
see that $J({\bf Q})\cong ({\bf Z}/3{\bf Z})^{3}\oplus {\bf Z}/2{\bf Z}$.
By comparison $E({\bf Q})\cong {\bf Z}/3{\bf Z}\oplus {\bf Z}/2{\bf Z}$.

We can find an effective
representative of each divisor class in ${\rm Pic}^{3}(C)({\bf Q})$.
They are supported on the point $\infty$, the four points on $y=0$,
the two points
on the bitangent $y=-1$,
the four points on $y=2$ (each is quadratic over ${\bf Q}$),
and 12 triples of non-collinear conjugate
cubic points.
The proposition then follows from Proposition~\ref{genus3}.

Of course $E({\bf Q})$ gives us the ${\bf Q}$-rational points. But it does
not give the points defined over quadratic and cubic extensions.
\end{proof}

\section{A 2-descent for the Jacobian of
a smooth plane quartic curve using bitangents}
\label{bitan}

Let $C$ be the curve over ${\bf Q}$ defined by the equation
$592900X^{4} + (-1609300Y + 1829520Z)X^{3} + (1253725Y^{2} - 244420ZY +
1648504Z^{2})X^{2} + (-219450Y^{3} - 220390ZY^{2} + 58564Z^{2}Y
+ 365904Z^{3})X
+ (11025Y^{4} + 6510ZY^{3} - 31379Z^{2}Y^{2} - 9548Z^{3}Y + 23716Z^{4})=0$ and
let $J$ be its Jacobian.
The lines $X=0$, $Y=0$, $Z=0$, $X+Y+Z=0$, $X-Y-2Z=0$, $2X-Y+Z=0$, and
$X-3Y+2Z=0$ are all bitangents of $C$
(see the proof of Proposition~\ref{consec} for the definition of bitangent).
The curve $C$ is a smooth plane quartic curve
and so has genus 3.
We will work over $K={\bf Q}$ and
use the multiplication by 2 map from $J$ to itself as our isogeny.
In this case $\Psi=\lambda^{-1}\hat{J}[\hat{2}]=J[2]$, where $\lambda$
is the canonical principal polarization of $J$ with respect to $C$.
The curve $C$
has the property that every element
of $J[2]$ is defined over ${\bf Q}$. This fact
simplifies the example and makes Assumption II hold.

Away from the line $Z=0$ we let
$x=X/Z$ and $y=Y/Z$ and denote points by their affine, $(x,y)$-coordinates.
The divisors of the functions $x$, $y$, $x+y+1$, $y-x+2$, $y-2x-1$ and
$x-3y+2$ are doubles of divisors, all of whose images have order
two in $J$; in fact they form a basis for $J[2]$.
Thus we can let $L={\bf Q}^{6}$ and $F=(x,y,x+y+1,y-x+2,
y-2x-1,x-3y+2)$.
The group $H^{1}({\bf Q},J[2])$ is isomorphic to
$L^{\ast}/L^{\ast 2}\cong
({\bf Q}^{\ast}/{\bf Q}^{\ast 2})^{6}$ by the map $k\circ w$.
The map $F$ is an injection from
$J({\bf Q})/2J({\bf Q})$ to $({\bf Q}^{\ast}/{\bf
Q}^{\ast 2})^{6}$.
The set $C({\bf Q})$ contains $(-7/5,0)$ and $(-1/7,0)$
(coming from the intersection with $y=0$) and
$(1/5, 8/5)$ so Assumption I holds from
Proposition~\ref{assi}.

It is a straightforward
exercise to show that this curve has nonsingular
reduction at all finite primes
greater than 17 and singular reduction at the others;
thus we can let $S= \{ \infty, 2, 3, 5, 7, 11, 13, 17 \} $.
The image of $J({\bf Q})$ under $F$ in
$({\bf Q}^{\ast}/{\bf Q}^{\ast 2})^{6}$ is contained in the image of
$H^{1}({\bf Q},J[2];S)$. Recall from Step 5 that ${\bf Q}(S,2)=
\langle -1, 2, 3, 5, 7, 11, 13, 17\rangle \subset {\bf Q}^{\ast}/
{\bf Q}^{\ast 2}$.
Under the identification of $H^{1}({\bf Q},J[2])$
with $({\bf Q}^{\ast}/{\bf Q}^{\ast 2})^{6}$, the group $H^{1}({\bf Q},
J[2];S)$ gets sent to $L(S,2)={\bf Q}(S,2)^{6}$.

In the following table, we show the images in $L(S,2)$,
under $F$, of the six elements
of $J[2]$ and two other rational divisor classes.
Along the top of the table,
we list the
component functions of $F$. When we write $\frac{1}{2} (x)$, for example,
we mean the divisor whose double is the divisor of $x$.

\[
\begin{array}{rrcccccc}
 & & x & y & x+y+1 & y-x+2 & y-2x-1 & x-3y+2 \\
& [\frac{1}{2} (x)] \mapsto
 & 5 & 3\cdot 7 & 2\cdot 5 \cdot 7 & 2\cdot 7 & -2\cdot 3\cdot 5\cdot 7 & 3
\cdot 7 \\
& [\frac{1}{2} (y)]\mapsto & -3\cdot 7 & 2\cdot 5\cdot 11 & 2\cdot 7 & 2\cdot 3\cdot 5\cdot 7
& -7 & -5 \cdot 7 \\
& [\frac{1}{2} (x+y+1)]\mapsto & -2 \cdot 5 \cdot 7 & -2 \cdot 7 & -2 \cdot 5 & 7 &
 -3 \cdot 5 \cdot 7 & -7 \\
& [\frac{1}{2} (y-x+2)]\mapsto & -2 \cdot 7 & -2 \cdot 3 \cdot 5 \cdot 7 & -7 & 2 \cdot
5 \cdot 17 & -2 \cdot 7 & -3 \cdot 5 \cdot 7 \\
& [\frac{1}{2} (y-2x-1)]\mapsto & 2 \cdot 3 \cdot 5 \cdot 7 & 7 & 3 \cdot 5 \cdot 7 & 2
\cdot 7 & 1 & 3 \cdot 5 \cdot 7 \\
& [\frac{1}{2} (x-3y+2)]\mapsto & -3 \cdot 7 & 5 \cdot 7 & 7 & 3 \cdot 5 \cdot 7 & -3
\cdot 5 \cdot 7 & -13 \\
& [(-7/5,0) - (-1/7,0)]\mapsto & 5 & -7 & -3 \cdot 5 \cdot 7 & 3 \cdot 7 \cdot 17
& -7 & 3 \cdot 5 \cdot 7 \cdot 13 \\
& [(1/5,8/5) - (-1/7,0)] \mapsto
& -5 \cdot 7 & 3 \cdot 11 & 3 \cdot 5 & 3 \cdot 7 \cdot
17 & -7 & -5 \cdot 7 \end{array}\]
The images of all eight divisor classes are independent.
Since the images of the six divisor classes of order two are independent,
they form a basis for $J[2]$. As the images of the other two divisors
are independent of the image of $J[2]$, and of each other in $J({\bf Q})/
2J({\bf Q})$, they
each have infinite order and are independent in the Mordell-Weil group.

We can compute $S^{2}({\bf Q},J)$ in a manner similar to previous examples.
The only difference is that we need to compute the intersection
of all eight groups
$\beta_{s}^{-1}(F_{s}(J({\bf Q}_{s})/2J({\bf Q}_{s})))$ for $s\in S$.
The intersection has dimension 8 as an ${\bf F}_{2}$-vector space and
a basis is the image of the eight rational divisors in the table.
Thus ${\rm dim}_{{\bf F}_{2}}J({\bf Q})/2J({\bf Q})=8$ and
${\rm dim}_{{\bf F}_{2}}J({\bf Q})[2]=6$
and so the Mordell-Weil
rank is exactly 2.

\begin{prop}
\label{bit}
The Mordell-Weil group over ${\bf Q}$ of the Jacobian of the smooth plane
quartic curve $C$, which is
bitangent to $X=0$, $Y=0$, $Z=0$, $X+Y+Z=0$, $X-Y-2Z=0$,
$2X-Y+Z=0$, and $X-3Y+2Z=0$, has rank 2.
\end{prop}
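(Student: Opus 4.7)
The plan is to apply the general algorithm of Section~\ref{alg} with $A=J$, $\phi=[2]$ and $q=2$, exploiting the fact that the seven listed bitangents produce a fully rational 2-torsion subgroup and a convenient choice of spanning functions. Since $\lambda$ is the canonical principal polarization, $\Psi=J[2]$. Writing the seven lines in affine coordinates and noting the divisor of each linear form $\ell$ cut on $C$ is $2D_\ell$ for a degree-$0$ divisor (after subtracting the right multiple of the line at infinity), the six classes $[\tfrac12(\ell)]$ for $\ell\in\{x,\,y,\,x+y+1,\,y-x+2,\,y-2x-1,\,x-3y+2\}$ are elements of $J[2]$. A short Riemann--Roch / dimension count gives $\dim_{\mathbf{F}_2}J[2]=6$, and once the six images under $F=(x,y,x+y+1,y-x+2,y-2x-1,x-3y+2)$ are shown to be independent in $L^\ast/L^{\ast 2}$ (first row block of the table), independence in $J[2]$ follows and we get a basis; the seventh bitangent is then forced to be a dependent combination. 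Because every element of $J[2]$ is defined over $\mathbf{Q}$, $L=\mathbf{Q}^6$ and Assumption~II is automatic; Assumption~I holds from Proposition~\ref{assi} since the rational points $(-7/5,0)$, $(-1/7,0)$, $(1/5,8/5)$ on $C$ force $d=1$.

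Next I would localize. The set $S$ is determined by a direct check that $C$ reduces smoothly at every finite prime $>17$, giving $S=\{\infty,2,3,5,7,11,13,17\}$. Under the identification $H^1(\mathbf{Q},J[2])\cong(\mathbf{Q}^\ast/\mathbf{Q}^{\ast 2})^6$ from $k\circ w$, the subgroup $H^1(\mathbf{Q},J[2];S)$ becomes $\mathbf{Q}(S,2)^6$ with $\mathbf{Q}(S,2)=\langle -1,2,3,5,7,11,13,17\rangle$, an ${\bf F}_2$-vector space of dimension $9\cdot 6=54$.

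The core of the proof is Step 6 and Step 7: for each $s\in S$, compute the local images $F_s(J(\mathbf{Q}_s)/2J(\mathbf{Q}_s))$ and intersect their preimages in $\mathbf{Q}(S,2)^6$ under the $\beta_s$. The local size is controlled by Proposition~\ref{size} for finite $s$ and Proposition~\ref{sizeR} for $s=\infty$: one gets $\dim J(\mathbf{Q}_s)/2J(\mathbf{Q}_s)=\dim J(\mathbf{Q}_s)[2]$ for $s\neq 2,\infty$, and $\dim J(\mathbf{Q}_\infty)/2J(\mathbf{Q}_\infty)=\dim J(\mathbf{Q}_\infty)[2]-3$, $\dim J(\mathbf{Q}_2)/2J(\mathbf{Q}_2)=\dim J(\mathbf{Q}_2)[2]+3$. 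For each $s$, generators come from good $\mathbf{Q}_s$-rational divisors (shifted off the avoidance set if needed), and their $F_s$-images are checked for independence in $L_s^\ast/L_s^{\ast 2}$, which by the injectivity of $F_s$ equals independence in $J(\mathbf{Q}_s)/2J(\mathbf{Q}_s)$. This is the main obstacle: it is purely computational but must be performed carefully at all eight primes, since a single mis-computed local image can let spurious classes into the Selmer group.

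Finally, I would combine. The eight global classes in the table (six 2-torsion plus $[(-7/5,0)-(-1/7,0)]$ and $[(1/5,8/5)-(-1/7,0)]$) have independent images in $\mathbf{Q}(S,2)^6$, so they span an $\mathbf{F}_2$-subspace of dimension~$8$ inside $S^{2}(\mathbf{Q},J)$. The local intersection computation should yield dimension exactly~$8$ for the Selmer group, forcing equality $S^{2}(\mathbf{Q},J)=F(J(\mathbf{Q})/2J(\mathbf{Q}))$ and hence $\dim_{\mathbf{F}_2}J(\mathbf{Q})/2J(\mathbf{Q})=8$. Subtracting $\dim_{\mathbf{F}_2}J(\mathbf{Q})[2]=6$ from the standard identity $J(\mathbf{Q})/2J(\mathbf{Q})\cong(\mathbf{Z}/2)^{r}\oplus J(\mathbf{Q})[2]$ yields Mordell--Weil rank $r=2$, which is Proposition~\ref{bit}.
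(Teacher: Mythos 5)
Your proposal follows the paper's own argument essentially step for step: same choice of $L=\mathbf{Q}^{6}$ and $F=(x,y,x+y+1,y-x+2,y-2x-1,x-3y+2)$, same verification of Assumptions I and II, same $S$, the same eight global classes whose independent images give the lower bound, and the same local intersection yielding a Selmer group of dimension $8$ and hence rank $2$. The only slip is arithmetic and harmless: $\mathbf{Q}(S,2)=\langle -1,2,3,5,7,11,13,17\rangle$ has dimension $8$, so $L(S,2)$ has dimension $48$, not $54$.
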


Just computing the characteristic polynomial of
Frobenius at $p=19$ seemed infeasible and so we do not know the splitting
behavior of $J$.

\section{Examples in the literature for genus higher than 1}
\label{lit}

In \cite{Sc2},
a 2-Selmer group is used to show that the Mordell-Weil rank over ${\bf Q}$
of the Jacobian of $y^{2}=f(x)$, where
$f(x)=x^{5}+16x^{4}-274x^{3}+817x^{2}+178x+1$, is 7. Let $L=
{\bf Q}[T]/(f(T))$ and ${\rm Cl}(L)$ denote the class group of the field $L$.
The fact that ${\rm dim}\, {\rm C}(L)/{\rm Cl}(L)^{2}$ is 4
was exploited.
In \cite{Sc1}, a 2-Selmer group is used to show that the Mordell-Weil rank
of the Jacobian of
$y^{2}=x(x-2)(x-3)(x-4)(x-5)(x-7)(x-10)$ over ${\bf Q}$ is 2.
In \cite{St2}, Stoll computed both 2-Selmer groups and $(1-\zeta_5)$-Selmer
groups for the Jacobians of some curves of the form $y^2=x^5+k$. Using
information from both, some were shown to have non-trivial 2-parts of
their Tate-Shafarevich groups.

In each of these cases, the hyperelliptic curve is of the form
described in Section~\ref{ytop}, namely $y^{2}=f(x)$
where $f$ has odd degree. As discussed in Section~\ref{ext},
there is a way of bounding the Mordell-Weil rank of the Jacobians of
hyperelliptic curves of the form $y^2=f(x)$, where $f$ has even degree;
see \cite{Ca,FPS,PS}.
The author has used this to show that the Mordell-Weil
rank over ${\bf Q}$
of the Jacobian of a curve of Colin Stahlke's given by $y^{2}=f(x)$
where $f(x)=
121x^6 - 138x^5 + 183x^4 + 370x^3 + 104x^2 - 112x + 1$
is exactly 12. Let $L={\bf Q}[T]/(f(T))$ and
${\rm Cl}(L)$ denote the class group of the field $L$. The fact that
${\rm dim}\, {\rm Cl}(L)/{\rm Cl}(L)^{2}$ is 9 was exploited.
In $\cite{FPS}$, the Mordell-Weil rank over ${\bf Q}$ of the Jacobian
of $y^{2}=x^{6}+8x^{5}+22x^{4}+22x^{3}+5x^{2}+6x+1$ is shown to be 1.
This is used to show that there are no ${\bf Q}$-rational quadratic
polynomials with rational periodic points of period 5.
In \cite{PS} the algorithm is extended further to curves of the
form $y^p = f(x)$ where the prime $p$ divides the degree of $f$.
In addition, the Mordell-Weil rank over ${\bf Q}$ of the Jacobian
of $y^{3}=(x^{2}-x+6)^{2}(x^{8}+3x+3)$
is shown to be 2. This example required working in a number field of
degree 16 over ${\bf Q}$.

Flynn has a technique for bounding the Mordell-Weil
rank of the Jacobian $J$ of a hyperelliptic
curve $C$ of genus 2 over a number field $K$, that
is the best one available for certain cases
(see \cite{Fl,CF}). Let us describe how it fits into our framework
and how it can be extended.
Assume $J[2]$ has a rational subgroup of order 4 which is isotropic
with respect to the 2-Weil pairing.
In most cases, the quotient
of $J$ by that subgroup is again the Jacobian $J'$ of a genus 2 curve $C'$.
The induced isogeny is called a Richelot's isogeny.
We denote it by $\phi$.
There is similarly a Richelot's isogeny $\phi'$ from $J'$
to $J$ such that $\phi'\circ\phi = 2$.
Because of the isotropy, $\lambda^{-1}\hat{J}[\hat{\phi'}]=J[\phi]$
for the canonical principal polarization $\lambda$ of $J$
with respect to $C$ (see \cite[prop.\ 16.8]{Mi2}).
In \cite{CF}, Cassels and Flynn present a method for computing
$S^{\phi}(K,J)$ and $S^{\phi'}(K,J')$ assuming that all elements of
$J[\phi]$ are rational.
They use the method described in Section 2.
The kernel of a Richelot's isogeny is isomorphic to
$V_{4}$, the Klein-4 group. The group
$H^{1}(G,V_{4})$ is trivial for all $G\subseteq {\rm Aut}(V_{4})$.
Thus for all possible Galois actions on $J[\phi]$ or $J'[\phi']$,
Assumption II holds and we can do a descent using a Richelot's isogeny.

There are a few examples in the literature like those in
Section~\ref{ytop} where $p\neq 2$.
In \cite{KS}, the Mordell-Weil rank
of the Jacobian of $y^{3}=x^{4}-1$ over ${\bf Q}(\zeta_{12})$ is shown
to be 0.
Fadeev and McCallum describe a map $F$
for quotients of the $p$th
Fermat curve given by $y^{p}=x^{a}(1-x)^{b}$ with $0 < a,b < p$;
see \cite{Fd,Mc}.

\pagebreak

\end{document}